\DeclareMathOperator{\m}{M}
\newcommand{\cT}{\mathcal{T}}
\newtheorem{theorem}{Theorem}[section]
\newtheorem{proposition}[theorem]{Proposition}
\newtheorem{lemma}[theorem]{Lemma}
\newtheorem{corollary}[theorem]{Corollary}
\theoremstyle{definition}
\newtheorem{definition}{Definition}[section]
\theoremstyle{remark}
\newtheorem{remark}[theorem]{Remark}
\newtheorem{cremark}[theorem]{Concluding Remarks}
\numberwithin{equation}{section}
\numberwithin{equation}{section}
\DeclareMathOperator{\bF}{{\mathds F}}
\DeclareMathOperator{\bN}{{\mathds N}}
\begin{document}
	
\title{$\m$-ideals: from Banach spaces to rings}
	
\author{David P. Blecher}
	
\address{Department of Mathematics, University of Houston, Houston, TX 77204-3008.}
	
\email{dpbleche@central.uh.edu}
	
\author{Amartya Goswami}
	
\address{[1]  Department of Mathematics and Applied Mathematics, University of Johannesburg, P.O. Box 524, Auckland Park 2006, South Africa. [2]  National Institute for Theoretical and Computational Sciences (NITheCS), South Africa.}
	
\email{agoswami@uj.ac.za}
	
\subjclass{16D25, 16D70, 16D90, 46L05, 46B28}

\keywords{$\m$-ideal, essential ideal, simple ring, minimal ideal, socle, Morita equivalence, ring decomposition}
	
\begin{abstract}
We introduce and investigate a class of ring ideals, termed ring \emph{$\m$-ideals}, inspired by the Alfsen--Effros theory of $\m$-ideals in Banach spaces.  We show that $\m$-ideals extend the classical notion of essential ideals and subsume them as a subclass. The central theorem provides a full characterization: an ideal is an $\m$-ideal if and only if it is either essential or relatively irreducible. This dichotomy reveals the abundant and diverse nature of $\m$-ideals, encompassing both essential and minimal ideals, and admits natural generalizations in rings beyond the commutative and unital settings.

We systematically study the algebraic stability of $\m$-ideals under standard constructions such as intersection, quotient, direct product, and Morita equivalence and establish their behavior in topological rings and operator algebras. In certain rings such as $\mathds{Z}_n$ and C*-algebras, we completely classify $\m$-ideals and relate them to algebraically minimal projections and central idempotents. The ring $\m$-ideals in $C(K)$ are shown to be precisely the essential ideals or those minimal ideals corresponding to isolated points.

Structurally, we show that the absence of proper $\m$-ideals characterizes simplicity, while rings in which every proper $\m$-ideal is a direct summand must decompose as finite direct sums of simple rings. In closing, we introduce the notion of $\m$-complements, drawing an analogy with essential extensions in module theory, and demonstrate their existence.\end{abstract}
\maketitle

\tableofcontents	
\section{Introduction}
The $\m$-ideals in a Banach space are one of the most important tools in the isometric theory of Banach spaces and constitute a formidable technology \cite{HWW93}. 
Drawing an analogy from their beautiful characterization by Alfsen and Effros, we are led to define a class of ideals in rings.   It turns out that this class contains the essential ideals.  
Recall that an ideal $J$ of a ring $R$ is called \emph{essential} if $J\cap I\neq 0$ for all nonzero ideals $I$ of $R$. As a generalization thereof, essential submodules play a pivotal role in the theory of injective modules via the notion of injective hulls. Borrowing ideas from various concepts related to essential modules, we introduce an analogous notion in the context of $\m$-ideals.

The aim of this paper is twofold: on the one hand, to systematically develop a theory of ring $\m$-ideals as a generalization of essential ideals; on the other hand, to relate this class of ideals to the corresponding notion in Banach spaces and C*-algebras.  The subtitle of our paper, in some sense, should be ``from Banach spaces back to rings" because part of
Alfsen and Effros' motivation was to `generalize' to Banach spaces the usual `calculus' of ideals in rings \cite{Lam01,Row88}.  By now, however, $\m$-ideals in Banach spaces have a beautiful and formidable theory of their own \cite{HWW93}, and some of these ideas may in turn be applied in algebra. 

We now describe the structure of our paper.
We investigate the stability of this class under the usual constructions with ring ideals, such as intersections, quotients, etc. 
We establish that $\m$-ideals strictly generalize essential ideals  (Proposition \ref{eim}), and provide a complete characterization: an ideal is an $\m$-ideal if and only if it is essential or relatively irreducible (Theorem \ref{chmi}). This characterization allows us to classify $\m$-ideals explicitly in important classes of rings, such as $ \mathds{Z}_n$ (Theorem \ref{charzn}), $ C(K)$ (Theorem \ref{ck}), and unital C*-algebras.  Indeed, the nonzero ring $\m$-ideals in a C*-algebra  are precisely either the essential ideals or the  ideals whose closure is a prime C*-algebra 
(See Corollary \ref{nck}.  Prime C*-algebras are well understood, and have a large theory, e.g.\ \cite{Bla06}.)   We show that $\m$-ideals are not preserved under direct sums or tensor products. We show that for any ring with proper ideals necessarily has proper $\m$-ideals (Corollary \ref{nmi}). Further, we show that Morita equivalence preserves M-ideals (Theorem \ref{morita}) and that every ideal is contained in a maximal $\m$-ideal unless it is a direct summand. From a structural standpoint, we prove that the absence of proper $\m$-ideals characterizes simple rings and that rings in which every proper $\m$-ideal is a direct summand must decompose as finite direct sums of simple rings (Theorem \ref{dss}). Motivated by analogies with essential extensions in module theory, we introduce the notion of $\m$-complements and establish their universal existence (Theorem \ref{mct}), thereby enriching the interplay between ideal theory and ring decomposition.

\section{Basic Properties}

According to Theorem 5.8 in \cite{AE72}, a closed subspace $J$ of a Banach space is an $\m$-ideal if and only if the following condition holds: if $B_1$, $\ldots$, $B_n$ are open balls with $B_1\cap \cdots\cap B_n\neq \emptyset$ and $B_i\cap J\neq \emptyset$ for all $i$, then $B_1\cap \cdots\cap B_n \cap J\neq \emptyset$. Motivated by this characterization, we propose the following definition for an $\m$-ideal of a ring.

\begin{definition}
\label{mid}
An ideal \(J \) of a ring $R$ is called a ring \emph{\(\m\)-ideal} if for a finite collection of nonzero ideals $I_1, I_2$, $\dots$, $I_n $ ($n\geqslant 2$) of $R$ with  $\bigcap_{k=1}^n I_k\neq 0$, the condition \(J \cap I_k \neq 0\) for each \(k\), where $k\in \{1, \ldots, n\}$, implies that
\[	J \cap \left( \bigcap_{k=1}^n I_k \right) \neq 0.
\]
\end{definition}

For brevity, henceforth we shall usually simply use the 
expression $\m$-ideal for `ring $\m$-ideal', and only add the `ring' adjective where there is a possibility of confusion with the Banach space notion (that is, in Banach space settings).  

All of our rings will be supposed to have identity for simplicity, although this is often not needed, and  Definition \ref{mid} makes perfect sense for nonunital rings.  
However, in Corollary \ref{uni}, we show that a subset $J$ of a nonunital ring $R$ is an $\m$-ideal  if and only if $J$ is an $\m$-ideal in the unitization of $R$.  So, we may safely avoid nonunital rings.
In a few of the results, we will require the ring to be commutative.
\bigskip

Although our choice of taking $n$ ideals $I_1$, $\ldots$, $I_n$ in Definition \ref{mid} was motivated by the characterization of $\m$-ideals of Banach spaces, it is, however, possible to restrict it to $n=2$.  
\begin{proposition}
In any ring, the following are equivalent:
\begin{enumerate}
\item $J$ is an $\m$-ideal.
		
\item  For every nonzero ideal $I_1,$ $I_2$ of $R$ such that $I_1 \cap I_2\neq 0$, the conditions $J\cap I_1 \neq 0$ and $J\cap I_2\neq 0$ imply that $J\cap (I_1 \cap I_2) \neq 0$.
\end{enumerate}
\end{proposition}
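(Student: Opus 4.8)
The plan is to prove the nontrivial implication (2) $\Rightarrow$ (1) by induction on the number $n$ of ideals, the reverse implication (1) $\Rightarrow$ (2) being immediate by specializing Definition \ref{mid} to $n = 2$.

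For the base case $n = 2$, the $\m$-ideal condition of Definition \ref{mid} restricted to two ideals is literally statement (2), so there is nothing to prove. For the inductive step, suppose the condition of Definition \ref{mid} has been verified for every collection of at most $n - 1$ nonzero ideals, and let $I_1, \ldots, I_n$ be nonzero ideals of $R$ with $\bigcap_{k=1}^n I_k \neq 0$ and $J \cap I_k \neq 0$ for each $k$. Put $I' = \bigcap_{k=1}^{n-1} I_k$. This is an ideal of $R$, and it is nonzero because it contains $\bigcap_{k=1}^{n} I_k \neq 0$; since moreover $J \cap I_k \neq 0$ for $k = 1, \ldots, n-1$, the inductive hypothesis applied to the $n-1$ ideals $I_1, \ldots, I_{n-1}$ yields $J \cap I' \neq 0$.

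Now apply (2) to the pair of nonzero ideals $I'$ and $I_n$: their intersection $I' \cap I_n = \bigcap_{k=1}^n I_k$ is nonzero by hypothesis, $J \cap I' \neq 0$ by the previous step, and $J \cap I_n \neq 0$ by hypothesis. Hence $J \cap (I' \cap I_n) \neq 0$, which is precisely $J \cap \bigl( \bigcap_{k=1}^n I_k \bigr) \neq 0$. This completes the induction, and hence the proof.

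There is no serious obstacle here; the only point requiring a moment's care is to verify that the intermediate intersection $\bigcap_{k=1}^{n-1} I_k$ is itself a nonzero ideal, so that both condition (2) and the inductive hypothesis may legitimately be invoked — and this is guaranteed simply by the inclusion $\bigcap_{k=1}^{n-1} I_k \supseteq \bigcap_{k=1}^{n} I_k$, the intersection of fewer ideals being larger.
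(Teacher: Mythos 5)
Your proof is correct and follows essentially the same route as the paper's: induction on $n$, collapsing the first $n-1$ (respectively first $n$) ideals into a single nonzero ideal via the inductive hypothesis and then applying condition (2) to that ideal together with the last one. The auxiliary check that the partial intersection is nonzero is exactly the observation the paper also relies on.
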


\begin{proof}
Obviously, (1) implies (2). To show (2) implies (1), we use induction on $n$. By (2), the property holds for $n=2$. Assume that the property holds for $n$. Let $I_1,\ldots,I_{n+1}$ be a collection of nonzero ideals of $R$ such that $ \bigcap_{k=1}^{n+1} I_k \neq 0$, and for each $k$ we have $J\cap I_k\neq 0$. By inductive hypothesis, we have $J\cap (\bigcap_{k=1}^{n} I_k ) \neq 0$. Note that \[\left(\bigcap_{k=1}^{n} I_k\right) \cap I_{n+1}= \bigcap_{k=1}^{n+1} I_k \neq 0\]  and $J \cap I_{n+1} \neq 0$. So by assumption, we conclude that \[J \cap \left(\bigcap_{k=1}^{n+1} I_k \right)= J \cap\left(\bigcap_{k=1}^{n} I_k\right) \cap I_{n+1}\neq 0.\qedhere\]
\end{proof}

\begin{remark}  \label{cke} According to \cite[Theorem 5.9]{AE72}, the characterization of $\m$-ideals of Banach spaces can be restricted to three open balls, but not to two. We just saw that the situation is different for rings.

We note that if the ring $R$ is a Banach space, then an \(\m\)-ideal of  $R$
in the sense above need not be an \(\m\)-ideal of  $R$ in the Banach space sense, nor vice versa.  Indeed, suppose that $R = C(K)$, where an $\m$-ideal
$J$ in the latter sense must be the subspace $J_E$ of functions vanishing on a closed subset  $E \subset K$ \cite{HWW93}.  Similarly, take $I_k = J_{E_k}$ for a  closed subsets $E_1, E_2$ in $K$.   Note that $J_{\bigcup_k \, E_k} = 
\bigcap_k \, J_k$.  
It is easy to find examples where 
$\bigcup_k \, E_k \neq K$, so that $\bigcap_{k=1}^2 I_k\neq 0$, and $E \cup E_k \neq K$ for each $k$, but $E \cup (\bigcup_k  E_k) = K$. That is, \[J \cap \left(\bigcap_{k=1}^2 I_k\right) = 0.\]
indeed, this happens in $K = [0,1]$.  Thus $J$ is not a ring $M$-ideal. 
See also Proposition \ref{ck}.

Conversely, consider $A = l^\infty$, with $J$ the finitely supported sequences.   This intersects every nonzero ideal (the noncommutative analogue is contained in every 
nonzero ideal), so it is a ring $\m$-ideal of  $R$
but not a Banach space $\m$-ideal since it is not closed.
\end{remark}

As indicated in the introduction, our notion of 
$\m$-ideals are a generalization of essential ideals; hence, naturally, these are our first examples.

\begin{proposition}\label{eim}
Every essential ideal of a ring is an \( \m \)-ideal.
\end{proposition}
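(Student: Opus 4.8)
The plan is to verify the defining condition of an $\m$-ideal directly from the hypothesis that $J$ is essential. By the Proposition just proved, it suffices to treat the case $n = 2$: given nonzero ideals $I_1, I_2$ of $R$ with $I_1 \cap I_2 \neq 0$ and with $J \cap I_1 \neq 0$ and $J \cap I_2 \neq 0$, I must show $J \cap (I_1 \cap I_2) \neq 0$. The key observation is that $I_1 \cap I_2$ is itself a nonzero ideal of $R$, so essentiality of $J$ applies to it directly: $J \cap (I_1 \cap I_2) \neq 0$ because $J$ meets every nonzero ideal nontrivially.

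In fact the argument shows something a little stronger, which is worth noting in the write-up: for an essential ideal $J$, the conditions ``$J \cap I_k \neq 0$ for each $k$'' are automatically satisfied as soon as each $I_k$ is nonzero, and likewise the defining implication's conclusion holds as soon as $\bigcap_k I_k \neq 0$. So an essential ideal satisfies the $\m$-ideal condition trivially, with the hypotheses of Definition \ref{mid} being redundant. I would phrase the proof to make this transparency clear, perhaps remarking that essentiality is a substantially stronger property than being an $\m$-ideal (consistent with the paper's stated theme that $\m$-ideals \emph{strictly} generalize essential ideals).

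There is essentially no obstacle here; the only point requiring the tiniest bit of care is the reduction to $n = 2$, which is already handled by the preceding Proposition, so I would simply invoke it. Concretely the proof reads: Let $J$ be an essential ideal of $R$. By the preceding Proposition it suffices to check condition (2). Let $I_1, I_2$ be nonzero ideals with $I_1 \cap I_2 \neq 0$. Since $I_1 \cap I_2$ is a nonzero ideal of $R$ and $J$ is essential, $J \cap (I_1 \cap I_2) \neq 0$. Hence $J$ is an $\m$-ideal. $\qed$

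One could alternatively skip the reduction and argue for general $n$ at once: if $\bigcap_{k=1}^n I_k \neq 0$ then, this being a nonzero ideal, $J \cap \bigl(\bigcap_{k=1}^n I_k\bigr) \neq 0$ by essentiality — the hypothesis $J \cap I_k \neq 0$ is not even needed. I expect the paper will then immediately follow this with an example showing the converse fails (e.g.\ a minimal ideal that is not essential), which is where the ``strictly'' in the introduction gets its teeth; but for the Proposition itself, the one-line essentiality argument is the whole story.
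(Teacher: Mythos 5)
Your proof is correct and is essentially the same as the paper's: both reduce to the two-ideal case and then apply essentiality of $J$ directly to the nonzero ideal $I_1 \cap I_2$. Your additional observation that the hypotheses of Definition \ref{mid} are redundant for essential ideals is accurate but not needed.
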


\begin{proof}
Suppose that $J$ is an essential ideal (and hence nonzero) of a ring $R$. Let $I_1$ and $I_2$ be two nonzero ideals of $R$ such that $I_1\cap I_2\neq 0$ and $J\cap I_k\neq 0$ for each $k$. Since $J$ is essential, we have $J\cap(I_1\cap I_2)\neq 0$.
\end{proof}

\begin{remark}
The reader may wonder whether every 
$\m$-ideal is essential. To see this is not true, consider the ring $\mathds{Z}_{12}$ and the ideal $J:= (3)$. Take $I_1:= (2)$ and $I_2:=(6)$. It is easy to see that $(3)$ is an $\m$-ideal. Note that we cannot include $(4)$ as $I_3$. However, $(3)$ is not an essential ideal because $(3)\cap (4)=(0).$     
\end{remark}

We claim that $\m$-ideals are abundant, and the following proposition will begin to validate this. Of course, we shall see more examples as we proceed.

\begin{proposition}\label{bpmi}
In a ring $R$, the following hold.
\begin{enumerate}
\item The zero ideal is an $\m$-ideal.

\item The maximal ideal of a nontrivial local ring is an \( \m \)-ideal.

\item\label{aid} Every  ideal of an integral domain is an \(\m\)-ideal.

\item If $R$ is an algebra and $e$ is a central element 
which is an `algebraically minimal idempotent' in $R$ (so $eR = \bF e$) then $Re$ is an \(\m\)-ideal. 

\item If $R$ is a ring and $e$ is a central element 
which satisfies $eR = R^{-1} e$ (or  $eR = ({\mathcal Z}(R))^{-1} e$) then $Re$ is an \(\m\)-ideal. 

\item\label{minm} If $J$ is a minimal ideal in $R$,  then
$J$ is an $\m$-ideal.
\end{enumerate}
\end{proposition}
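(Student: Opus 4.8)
The claim is that a minimal ideal $J$ of $R$ is an $\m$-ideal. By the reduction to two ideals (Proposition above), it suffices to take nonzero ideals $I_1, I_2$ with $I_1 \cap I_2 \neq 0$ and $J \cap I_k \neq 0$ for $k = 1, 2$, and show $J \cap (I_1 \cap I_2) \neq 0$. The plan is to exploit minimality: since $J$ is a minimal ideal and $J \cap I_k$ is an ideal of $R$ contained in $J$ which is nonzero, minimality forces $J \cap I_k = J$, i.e. $J \subseteq I_k$, for each $k \in \{1,2\}$. Hence $J \subseteq I_1 \cap I_2$, and therefore $J \cap (I_1 \cap I_2) = J \neq 0$.

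The only point needing a word of care is that $J \cap I_k$ is genuinely a (two-sided) ideal of $R$: this is immediate since the intersection of two two-sided ideals is a two-sided ideal, and it is nonzero by hypothesis, so minimality of $J$ applies directly. One should also observe that a minimal ideal is by definition nonzero, so $J$ itself qualifies as a potential $\m$-ideal. This argument in fact shows something slightly stronger and cleaner: any minimal ideal is contained in every ideal it meets nontrivially, which is precisely the noncommutative analogue alluded to in Remark \ref{cke} and is exactly the property used there to exhibit ring $\m$-ideals that are not Banach-space $\m$-ideals.

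There is essentially no obstacle here — the statement is a direct consequence of the definition of minimality together with the $n=2$ reduction. If one wanted to avoid invoking the reduction proposition, the same one-line observation ($J \cap I_k \neq 0 \implies J \subseteq I_k$) handles arbitrary finite collections simultaneously: from $J \subseteq I_k$ for all $k$ we get $J \subseteq \bigcap_{k=1}^n I_k$, whence $J \cap \bigl(\bigcap_{k=1}^n I_k\bigr) = J \neq 0$. Either route is a two-line proof.
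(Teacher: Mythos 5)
Your argument for item (6) is correct and is essentially the paper's own proof: the paper likewise observes that minimality forces $J \cap K = J$ whenever $J \cap K \neq 0$, so that $J$ is contained in every ideal it meets nontrivially, and the conclusion is immediate. Your version merely spells this out, and your remark that the argument handles arbitrary finite collections directly (without invoking the $n=2$ reduction) is accurate.

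The one issue is coverage: the statement you were given has six parts, and your proposal proves only part (6). The remaining items do need to be dispatched, though each is short: (1) is vacuously true; (2) and (3) follow from Proposition \ref{eim}, since the maximal ideal of a nontrivial local ring and every nonzero ideal of an integral domain are essential (and the zero ideal is covered by (1)); and (4) and (5) are in fact special cases of (6) in spirit --- in each, the key point is that an ideal $I$ satisfies $I \cap eR \neq 0$ if and only if $e \in I$, i.e.\ if and only if $eR \subseteq I$, which is exactly the containment property your proof of (6) isolates. So your argument, suitably rephrased, actually unifies (4), (5), and (6), as the paper itself notes; but as written the proposal does not discharge items (1)--(5).
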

	
\begin{proof}
The assertion (1) is vacuously true.
The properties (2) and (3)  follow from Proposition \ref{eim}.
For (4), observe that if $R$ is an algebra and $e$ is a central element 
which is a minimal idempotent in $R$, and $J$ is an ideal in $R$, then $J \cap eR \neq 0$ if and only if $e \in J$.   Now it is clear $eR$ is an \(\m\)-ideal. 
The proof of (5)  is similar to the previous proof.   An ideal $I$ intersects $eR$ if and only if it contains $er$ for an invertible $r$, and so if and only if $e \in I$.  Now it is clear $eR$ is an \(\m\)-ideal. 
The property (6) is in fact generalizes the last couple of items.  If $J$ is a simple ideal in $R$  and $K$ is another nonzero ideal, then $J \cap K\neq 0$ implies that $J \cap K = J$.   The result is clear from this. 
\end{proof}

\begin{remark}
Note that a zero ideal of a ring, by definition, is not essential, but it is an $\m$-ideal. In the ring $\mathds{Z}$ of integers, for each prime $p$, the ideal $(p)$ is an essential ideal, and hence, an $\m$-ideal by Proposition \ref{bpmi}(\ref{aid}).   However, $\bigcap_{p}(p)=(0)$ is not essential but is an $\m$-ideal. Although $\m$-ideals are a generalization of essential ideals, interestingly enough, minimal ideals are very far from being essential.

Jacobson radicals are generally not essential. For example, let us consider the ring $\mathds{Z}_{24}$. In this ring, $\mathrm{Jac}(\mathds{Z}_{24})=(6)$, an $\m$-ideal; however, it is not essential because $(6)\cap  (8)=(0)$.    
\end{remark}

\begin{proposition}
The intersection of a finite number of $\m$-ideals is an $\m$-ideal. \end{proposition}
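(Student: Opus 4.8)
The plan is to reduce to the case of two $\m$-ideals, and then invoke the proposition above (the equivalence of the $n$-fold condition with its $n=2$ version) to reduce further to two test ideals. First I would argue by induction on the number of $\m$-ideals: since $\bigcap_{i=1}^{m+1} J_i = \bigl(\bigcap_{i=1}^{m} J_i\bigr) \cap J_{m+1}$ and a finite intersection of ideals is an ideal, it suffices to prove that the intersection $J := J_1 \cap J_2$ of two $\m$-ideals $J_1, J_2$ is again an $\m$-ideal.

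To do this, by the preceding proposition it is enough to verify the defining condition for two nonzero ideals $I_1, I_2$ of $R$ with $I_1 \cap I_2 \neq 0$, $J \cap I_1 \neq 0$, and $J \cap I_2 \neq 0$; I must show $J \cap I_1 \cap I_2 \neq 0$. The key idea is to choose the right auxiliary ideals to feed into the two $\m$-ideal hypotheses, in the right order. Set $A := J_2 \cap I_1$ and $B := J_2 \cap I_2$; these are ideals of $R$, and they are nonzero since $A \supseteq J \cap I_1 \neq 0$ and $B \supseteq J \cap I_2 \neq 0$.

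Next I would apply the $\m$-ideal property of $J_2$ to the ideals $I_1, I_2$: we have $I_1 \cap I_2 \neq 0$, $J_2 \cap I_1 \supseteq J \cap I_1 \neq 0$, and likewise $J_2 \cap I_2 \neq 0$, so $A \cap B = J_2 \cap I_1 \cap I_2 \neq 0$. Now apply the $\m$-ideal property of $J_1$ to $A$ and $B$: we have just shown $A \cap B \neq 0$, while $J_1 \cap A = J_1 \cap J_2 \cap I_1 = J \cap I_1 \neq 0$ and similarly $J_1 \cap B = J \cap I_2 \neq 0$; hence $J_1 \cap (A \cap B) \neq 0$. Since $J_1 \cap A \cap B = J_1 \cap J_2 \cap I_1 \cap I_2 = J \cap (I_1 \cap I_2)$, this is exactly the desired conclusion, and the induction then finishes the general case.

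I do not anticipate a genuine obstacle here; the only points requiring care are that the auxiliary ideals $A$ and $B$ are nonzero (which holds precisely because $J \cap I_1$ and $J \cap I_2$ are nonzero) and that $A \cap B \neq 0$ before one invokes the property of $J_1$ — which is exactly why the property of $J_2$ must be used first. The reduction to $n = 2$ test ideals via the preceding proposition is what keeps the bookkeeping light; running the argument directly with $n$ test ideals $I_1, \dots, I_n$ would also work but would be notationally heavier.
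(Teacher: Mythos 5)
Your proof is correct and follows essentially the same route as the paper's: reduce to two $\m$-ideals, apply the $\m$-ideal property of $J_2$ to $I_1, I_2$ first, and then apply the property of $J_1$ to the auxiliary ideals $J_2\cap I_1$ and $J_2\cap I_2$. The only difference is that you spell out the induction on the number of $\m$-ideals, which the paper leaves implicit.
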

	
\begin{proof}
Here is a proof for two $\m$-ideals.  Suppose that $J_1$ and $J_2$ be two $\m$-ideals of a ring $R$. Let $I_1$ and $I_2$ be two nonzero ideals of $R$ such that $I_1\cap I_2\neq 0$ and $(J_1\cap J_2)\cap I_k\neq 0$ for each$k$. This, in particular, implies that $J_i \cap I_k\neq 0$ for each $k$. Since $J_2$ is an $\m$-ideal, we obtain $J_2\cap (I_1\cap I_2)\neq 0$. Let us now consider the ideals $J_2\cap I_1$ and $J_2\cap I_2$. Notice that 
\[\bigcap_{k=1}^2 (J_2\cap I_k)=J_2\cap (I_1\cap I_2)\neq 0,\]
and $J_1\cap (J_2\cap I_k)\neq 0$ for each $k$. Since $J_1$ is also an $\m$-ideal, we have that
\[(J_1\cap J_2)\cap (I_1\cap I_2)=J_1\cap \left( \bigcap_{k=1}^2 (J_2 \cap I_k) \right)\neq 0.\qedhere\]
\end{proof}

The intersection of an infinite number of $\m$-ideals is not necessarily an $\m$-ideal.  For example, consider the intersection of the (essential) ideals $J_r$ in $C([0,1])$ of functions vanishing at a rational point $r$ in $[0,1]$.   

\begin{proposition}\label{jk} If $J$ is an $\m$-ideal of $R$, and $K$ is an ideal of $R$ which is contained in $J$, then $J/K$ is an $\m$-ideal of $R/K$.
\end{proposition}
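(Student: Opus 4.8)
The plan is to run everything through the lattice isomorphism theorem for $R/K$. Writing $\pi\colon R\to R/K$ for the quotient map, I would first record the standard facts that $\bar I\mapsto\pi^{-1}(\bar I)$ is an inclusion-preserving bijection from the ideals of $R/K$ onto the ideals of $R$ containing $K$, that it respects finite intersections, and that an ideal $I\supseteq K$ of $R$ satisfies $I/K=0$ in $R/K$ precisely when $I=K$. Since $K\subseteq J$, these facts give $(J/K)\cap(I/K)=(J\cap I)/K$ for every ideal $I\supseteq K$ of $R$; in particular a nonzero ideal of $R/K$ pulls back to an ideal of $R$ that strictly contains $K$ and is therefore nonzero (the case $K=0$ being trivial, since then $R/K=R$).

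With this dictionary, the argument would proceed as follows. Take nonzero ideals $\bar I_1,\bar I_2$ of $R/K$ with $\bar I_1\cap\bar I_2\neq 0$ and $(J/K)\cap\bar I_k\neq 0$ for $k=1,2$, and set $I_k:=\pi^{-1}(\bar I_k)$, so $K\subsetneq I_k$. Transfer the hypotheses down to $R$: from $(I_1\cap I_2)/K=\bar I_1\cap\bar I_2\neq 0$ we get $I_1\cap I_2\supsetneq K$, hence $I_1\cap I_2\neq 0$; likewise $(J\cap I_k)/K=(J/K)\cap\bar I_k\neq 0$ gives $J\cap I_k\supsetneq K$, hence $J\cap I_k\neq 0$. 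Since $J$ is an $\m$-ideal of $R$, Definition \ref{mid} now yields $J\cap I_1\cap I_2\neq 0$. Pushing back up, $(J/K)\cap\bar I_1\cap\bar I_2=(J\cap I_1\cap I_2)/K$, and one wants to see this is nonzero, which would complete the verification of the $\m$-ideal property for $J/K$.

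This last conclusion is the genuinely delicate one, and the step I expect to be the main obstacle: $(J\cap I_1\cap I_2)/K\neq 0$ is equivalent to $J\cap I_1\cap I_2\neq K$, not merely to $J\cap I_1\cap I_2\neq 0$, because $K\subseteq J\cap I_1\cap I_2$ automatically. The $\m$-ideal property of $J$ in $R$ only delivers nonzeroness, so when $K\neq 0$ one must work to escape $K$. The route I would try is to feed a carefully chosen third ideal $D$ of $R$ into the $n=3$ case of Definition \ref{mid}: if one can find $D$ with $D\cap K=0$ while $D\cap I_1\cap I_2\neq 0$ and $J\cap D\neq 0$, then $J\cap I_1\cap I_2\cap D$ is forced simultaneously to be nonzero (by the $\m$-ideal property applied to $I_1,I_2,D$) and to lie inside $K\cap D=0$, a contradiction, so that $J\cap I_1\cap I_2\not\subseteq K$. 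If no such transverse $D$ is available, one would instead pick $a\in(J\cap I_1)\setminus K$ and $b\in(J\cap I_2)\setminus K$ and argue with the two-sided ideals they generate inside $J$. Getting one of these escape-from-$K$ mechanisms to go through is the crux, and the place I would scrutinise most carefully for a gap.
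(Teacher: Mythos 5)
Your diagnosis of the crux is exactly right, and unfortunately no ``escape-from-$K$'' mechanism can be made to work, because the statement is false as it stands. Take $R=\mathds{Z}$, $J=2\mathds{Z}$ and $K=30\mathds{Z}\subseteq J$. Since $\mathds{Z}$ is an integral domain, $J$ is an $\m$-ideal of $R$ (Proposition \ref{bpmi}, or directly: every nonzero ideal of $\mathds{Z}$ is essential). But $R/K=\mathds{Z}_{30}$ and $J/K=(2)$, which is not an $\m$-ideal of $\mathds{Z}_{30}$: with $\bar I_1=(3)$ and $\bar I_2=(5)$ one has $\bar I_1\cap\bar I_2=(15)\neq 0$, $(2)\cap(3)=(6)\neq 0$, $(2)\cap(5)=(10)\neq 0$, yet $(2)\cap(15)=0$ --- this is precisely the example the paper itself uses elsewhere to exhibit a non-$\m$-ideal. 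Running your dictionary on this example shows where everything dies: $J\cap I_1\cap I_2=2\mathds{Z}\cap 3\mathds{Z}\cap 5\mathds{Z}=30\mathds{Z}=K$, which is nonzero in $R$ but is swallowed entirely by $K$. The $\m$-property of $J$ delivers exactly $K$ and nothing more; your proposed transverse ideal $D$ cannot exist here, and the fallback with $a\in(J\cap I_1)\setminus K$, $b\in(J\cap I_2)\setminus K$ also fails (e.g.\ $a=6$, $b=10$ generate ideals whose intersection is again $30\mathds{Z}=K$).

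For what it is worth, the paper's own proof commits precisely the error you isolated: it deduces $J\cap q_K^{-1}(\bar I_1)\cap q_K^{-1}(\bar I_2)\neq 0$ and then picks $x$ in that set, but since $K\subseteq J$ and $K\subseteq q_K^{-1}(\bar I_k)$, the intersection always contains $K$, so ``nonzero'' is automatic and carries no information when $K\neq 0$; what is actually needed is that the intersection strictly contains $K$, i.e.\ that some such $x$ satisfies $q_K(x)\neq 0$. So your proposal is not a failed proof of a true theorem but a correct reduction that exposes a false claim. The statement does hold under extra hypotheses --- trivially when $K=0$, or whenever one can verify independently that $J/K$ is essential or relatively irreducible in $R/K$ --- but not in the stated generality.
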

	
\begin{proof} Suppose that $I_1, I_2$ are ideals in $R/K$ which intersect each other and intersect $J/K$.  One easily checks that $q_K^{-1}(I_1), q_K^{-1}(I_2)$ are ideals in $R$ which intersect each other and intersect $J$.  Thus $J \cap q_K^{-1}(I_1) \cap q_K^{-1}(I_2) \neq 0$.  If $x$ is in the latter intersection,
then $q_K(x) \in (J/K) \cap I_1 \cap I_2$. 
\end{proof}

If we allow nonunital rings, several other items in the `calculus of Banach space $\m$-ideals' carry over.   For example, 
if $J$ is a ring $\m$-ideal of $R$, and $K$ is a ring $\m$-ideal of $J$, then
$K$ is a ring $\m$-ideal of $R$ (see Proposition \ref{krs}.

Note that a nonunital ring $R$ is an $\m$-ideal of $R^1$, since it is an essential ideal.   More generally, we have from the observations above that:

\begin{corollary}
\label{uni} 
If $R$ is a nonunital ring then the $\m$-ideals of $R$ are the  $\m$-ideals of the unitization $R^1$
that are contained in $R$.  They are also the intersections of $R$ with 
the  $\m$-ideals of  $R^1$.
\end{corollary}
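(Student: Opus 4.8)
The plan is to derive both statements from a few facts already in place: the standard description of ideals of a unitization, namely that the ideals of $R$ are exactly the ideals of $R^1$ that are contained in $R$; the fact, recorded just above, that $R$ is an $\m$-ideal of $R^1$; the transitivity of the $\m$-ideal relation (Proposition \ref{krs}); and the stability of $\m$-ideals under finite intersections (the proposition proved above). Note also that Definition \ref{mid} applies verbatim to the possibly nonunital ring $R$, so ``$\m$-ideal of $R$'' is unambiguous, and that $\m$-ideals are in particular nonzero-or-zero ideals, so there is no issue of well-definedness.

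For the inclusion of the $\m$-ideals of $R$ among the $\m$-ideals of $R^1$ contained in $R$: let $J$ be an $\m$-ideal of $R$. Then $J\subseteq R$, and applying Proposition \ref{krs} with ambient ring $R^1$, middle $\m$-ideal $R$, and inner $\m$-ideal $J$, we obtain that $J$ is an $\m$-ideal of $R^1$ (and in particular an ideal of $R^1$, consistently with the standard description since $J\subseteq R$). For the reverse inclusion: let $L$ be an $\m$-ideal of $R^1$ with $L\subseteq R$. Then $L$ is an ideal of $R$, and if $I_1,I_2$ are nonzero ideals of $R$ with $I_1\cap I_2\neq 0$ and $L\cap I_k\neq 0$ for each $k$, then $I_1,I_2$ are again nonzero ideals of $R^1$ by the standard description, so the $\m$-ideal property of $L$ in $R^1$ gives $L\cap I_1\cap I_2\neq 0$; hence $L$ is an $\m$-ideal of $R$. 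This proves the first description.

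For the second description, let $S$ be the collection of all ideals of the form $R\cap L$ with $L$ an $\m$-ideal of $R^1$. If $J$ is an $\m$-ideal of $R$, then by the first description $J$ is an $\m$-ideal of $R^1$ and $J=R\cap J$, so $J\in S$. Conversely, if $L$ is an $\m$-ideal of $R^1$, then $R\cap L$ is the intersection of the two $\m$-ideals $R$ and $L$ of $R^1$, hence is an $\m$-ideal of $R^1$ by stability under finite intersection; being contained in $R$, it is an $\m$-ideal of $R$ by the first description. Thus $S$ is precisely the set of $\m$-ideals of $R$.

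I do not anticipate a genuine obstacle here: the argument is organizational, resting on the transitivity and finite-intersection propositions together with the fact that $R$ is an $\m$-ideal of $R^1$. The two points that need a little care are the standard unitization identity ``ideals of $R$ $=$ ideals of $R^1$ contained in $R$'', which is exactly what allows the test ideals $I_1,I_2$ to be moved between $R$ and $R^1$, and keeping track of which ring plays the ambient, middle, and inner role when invoking Proposition \ref{krs}.
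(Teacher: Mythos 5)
Your proof is correct and follows essentially the same route as the paper: the forward inclusion via transitivity (Proposition \ref{krs}) together with $R$ being an essential, hence $\m$-, ideal of $R^1$, and the second description via stability of $\m$-ideals under finite intersection. You merely spell out more explicitly the reverse inclusion (that an $\m$-ideal of $R^1$ contained in $R$ is an $\m$-ideal of $R$, using that ideals of $R$ are exactly the ideals of $R^1$ contained in $R$), which the paper leaves implicit in ``the observations above.''
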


\begin{proof}  The first assertion follows from 
the observations above the corollary.  If $J$ is
an $\m$-ideal of  $R^1$ then $J \cap R$ is an $\m$-ideal of $R^1$ by Proposition \ref{bpmi} (1).
Hence, it is an $\m$-ideal of $R$ by the first 
assertion. 
\end{proof}

Using this corollary, many of the results in our paper can be extended to $\m$-ideals of nonunital rings in the obvious way.  We will not usually take the time to do this, but instead leave it to the reader.

We note that if $J_k$ is an $\m$-ideal in $R_k$ for $k = 1, \ldots, m$, then $\oplus_k \, J_k$ need not be an $\m$-ideal in $\oplus_k \, R_k$.  Indeed $J_1 \oplus R_2$ need not be an $\m$-ideal in $R_1 \oplus R_2$.  See Theorem \ref{fix}.  Thus, the sum of two $\m$-ideals need not be an $\m$-ideal.  
Similarly, $J_1 \otimes J_2$ need not be an $\m$-ideal in $R_1 \otimes R_2$, even if $J_2 = R_2$.  

\begin{remark}
Based on the above abundance of examples, one may be tempted to believe that all nonzero ideals of rings are $\m$-ideals. However, this is easily seen not to be true. Consider the simple example of the ring $\mathds{Z}_{30}$ and the ideal $J:= (2)$. If we take $I_1:= (3)$ and $I_2:=  (5)$, then $J\cap I_1= (6)$, $J\cap I_2= (10)$, and $I_1\cap I_2= (15)$. However, $J\cap (I_1\cap I_2)=(0)$.    
\end{remark}

\section{Characterizations and Applications}

Our next goal is to give a characterization of $\m$-ideals of rings, and this will illuminate further why minimal ideals are $\m$-ideals. This result and its consequences are an ideal-theoretic adaptation from \cite{GCM25}. Before we state the result, we need a definition.

\begin{definition}
We say that an ideal $I$ of a ring $R$ is \emph{relatively irreducible}\footnote{\textit{cf}. An ideal $I$ of $R$ is called \emph{irreducible} if for any two ideals $J$, $K$ of $R$ and $I=J\cap K$ implies that either $J=I$ or $K=I$.  Our `relatively irreducible' is a form of `prime-ness'} if for every ideal $J$ and $K$ of $R$ with $J\subseteq I$, $K\subseteq I$, and $J\cap K=0$ implies that either $J=0$ or $K=0$. 
\end{definition}

\begin{theorem}\label{chmi}
An ideal $I$ of a ring $R$ (not necessarily unital) is an $\m$-ideal if and only if either it is essential or relatively irreducible.
\end{theorem}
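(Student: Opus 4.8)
The plan is to prove the two directions separately, and in each direction to argue by contraposition. For the ``if'' direction, suppose $I$ is either essential or relatively irreducible; I want to show $I$ is an $\m$-ideal. If $I$ is essential, this is already Proposition \ref{eim}, so assume $I$ is relatively irreducible but not an $\m$-ideal. By the equivalent formulation (the proposition following Definition \ref{mid}), there are nonzero ideals $I_1,I_2$ with $I_1\cap I_2\neq 0$ and $I\cap I_1\neq 0$, $I\cap I_2\neq 0$, yet $I\cap I_1\cap I_2=0$. Set $J:=I\cap I_1$ and $K:=I\cap I_2$; these are nonzero ideals contained in $I$. The key observation is that $J\cap K = I\cap I_1\cap I_2 = 0$, so relative irreducibility forces $J=0$ or $K=0$, a contradiction. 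Hence a relatively irreducible ideal is an $\m$-ideal.

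For the ``only if'' direction, suppose $I$ is an $\m$-ideal that is \emph{not} essential; I must show $I$ is relatively irreducible. Since $I$ is not essential, fix a nonzero ideal $L$ of $R$ with $I\cap L=0$. Now take ideals $J,K$ with $J\subseteq I$, $K\subseteq I$, and $J\cap K=0$, and suppose for contradiction that both $J\neq 0$ and $K\neq 0$. The idea is to feed $I_1:=J\oplus L$ (i.e.\ $J+L$, which is a direct sum of ideals since $J\subseteq I$ and $I\cap L=0$ give $J\cap L=0$) and $I_2:=K\oplus L = K+L$ into the $\m$-ideal property. Then $I_1$ and $I_2$ are nonzero, $I_1\cap I_2\supseteq L\neq 0$, and $I\cap I_1\supseteq J\neq 0$, $I\cap I_2\supseteq K\neq 0$. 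The $\m$-ideal property yields $I\cap I_1\cap I_2\neq 0$. The crux is then to compute $I_1\cap I_2$ and check it meets $I$ only in $0$: one shows $I\cap (J+L)\cap(K+L) = 0$, using $I\cap L = 0$, $J\cap K=0$, and that the sums are modular/direct, which contradicts the previous line. This contradiction gives $J=0$ or $K=0$, so $I$ is relatively irreducible.

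The main obstacle I anticipate is the computation $I\cap (J+L)\cap (K+L)=0$ in the ``only if'' direction. One has to be careful that the modular law applies: since $J\subseteq I$ we get $I\cap(J+L) = J + (I\cap L) = J$ by modularity (as $J\subseteq I$), and similarly $I\cap(K+L)=K$; hence $I\cap(J+L)\cap(K+L) = J\cap K = 0$ directly, without needing anything about $L$ beyond $I\cap L=0$. So in fact the modular law does all the work, and the argument is cleaner than it first appears — the only subtlety is invoking modularity correctly for ideals (which holds since ideals form a modular lattice). I would also double-check the edge case where $R$ is nonunital, but since the argument only uses lattice properties of two-sided ideals, unitality plays no role, consistent with the parenthetical in the statement.
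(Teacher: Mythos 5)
Your proposal is correct and follows essentially the same route as the paper's proof: the ``if'' direction reduces relative irreducibility to the two-ideal formulation via $J:=I\cap I_1$, $K:=I\cap I_2$, and the ``only if'' direction uses the same auxiliary ideals $J+L$, $K+L$ together with the modular law identity $I\cap(J+L)=J+(I\cap L)=J$. The only difference is cosmetic (contraposition/contradiction instead of a direct argument), and your closing observation that modularity alone does the work is exactly the point of the paper's computation.
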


\begin{proof}
Clearly, if $I$ is essential, then $I$ is an $\m$-ideal by Proposition \ref{eim}. Now suppose that $I$ is relatively irreducible. Let $J$ and $K$ be two nonzero ideals of $R$ such that $J\cap K\neq 0$, $I\cap J\neq 0$, and $I\cap K\neq 0$. Since $I$ is relatively irreducible,
this implies
\[I\cap (J\cap K)=(I\cap J)\cap (I\cap K)\neq 0.\]
Hence $I$ is an $\m$-ideal. Conversely, suppose that $I$ is an $\m$-ideal and $I$ is not essential. We show that $I$ is relatively irreducible. Since $I$ is not essential, there exists a nonzero ideal $L$ in $R$ such that $I\cap L=0$. Let $J$ and $K$ be nonzero ideals of $R$ such that $J\subseteq I$ and $K\subseteq I$. We claim that $J\cap K\neq 0$. Let $J':=J+L$ and $K':= K+L$. Then $J'\cap K'\supseteq L \neq 0$. Applying modular law, we obtain 
\[I\cap J'=I\cap(J+L)=J+(I\cap L)=J\neq 0.\]
Similarly, we get $I\cap K'=K\neq 0$. Since $I$ is a $\m$-ideal, we have
\[ J\cap K=(I\cap J')\cap (I\cap K')=I\cap (J'\cap K')\neq 0.\qedhere\]
\end{proof}

\begin{remark}
Every minimal ideal is relatively irreducible. Indeed, let $I$ be a minimal ideal of $R$. Suppose that $J\subseteq I$, $K\subseteq I$, and $J\cap K=0$. Since $I$ is minimal, the only case we need to care about is when $J=I$ and $K=I$. But then $J\cap K=I\neq 0$, a contradiction. Thus, the assertion in Proposition \ref{bpmi}(\ref{minm}) is no longer surprising. However, note that there exist $\m$-ideals that are neither essential nor minimal (for example, consider the ideal $3\mathds{Z}_{12}$ in the ring $\mathds{Z}_{12}$). So, Theorem \ref{chmi} can not be improved further.   
\end{remark}

For the rest of this section, we shall see some applications of the above characterization theorem. 
Recall that the sum of minimal ideals of a ring $R$ is called the \emph{socle} of $R$, and we denote it by $\mathrm{Soc}(R)$. It is well-known that the socle of a ring $R$ is the intersection of all essential ideals of $R$. Now, Theorem \ref{chmi}, tells us when $\mathrm{Soc}(R)$ is an $\m$-ideal. 

\begin{theorem}
Suppose $n$ denotes the number of minimal ideals of a ring $R$. If $n\leqslant 1$, the $\mathrm{Soc}(R)$ is an $\m$-ideal. If $n\geqslant 2$, the $\mathrm{Soc}(R)$ is an $\m$-ideal if and only if it is essential.
\end{theorem}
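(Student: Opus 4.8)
The plan is to apply Theorem \ref{chmi} directly, reducing everything to a case analysis on $n$, the number of minimal ideals of $R$. Recall $\mathrm{Soc}(R)$ is the sum of all minimal ideals of $R$.

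First consider $n = 0$. Then $\mathrm{Soc}(R) = 0$, which is an $\m$-ideal by Proposition \ref{bpmi}(1). Next consider $n = 1$, say $\mathrm{Soc}(R) = I$ is the unique minimal ideal. Here I would argue that $I$ is relatively irreducible: if $J, K$ are nonzero ideals with $J \subseteq I$, $K \subseteq I$, then by minimality $J = K = I$, so $J \cap K = I \neq 0$; hence the hypothesis $J \cap K = 0$ of relative irreducibility is never met and the condition holds vacuously. (This is exactly the observation in the remark following Theorem \ref{chmi}.) So $\mathrm{Soc}(R)$ is relatively irreducible, hence an $\m$-ideal by Theorem \ref{chmi}. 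This handles the claim for $n \leqslant 1$.

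Now suppose $n \geqslant 2$. By Theorem \ref{chmi}, $\mathrm{Soc}(R)$ is an $\m$-ideal if and only if it is essential or relatively irreducible, so it suffices to show $\mathrm{Soc}(R)$ is \emph{not} relatively irreducible; then the dichotomy collapses to ``$\m$-ideal $\iff$ essential.'' To see this, pick two distinct minimal ideals $M_1 \neq M_2$. Both are contained in $\mathrm{Soc}(R)$ and both are nonzero. Their intersection $M_1 \cap M_2$ is an ideal contained in each $M_i$; by minimality it is either $0$ or equals $M_i$. If it equalled $M_1$ then $M_1 \subseteq M_2$, forcing $M_1 = M_2$ by minimality, a contradiction; so $M_1 \cap M_2 = 0$. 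Thus we have exhibited nonzero ideals $J = M_1$, $K = M_2$, both contained in $\mathrm{Soc}(R)$, with $J \cap K = 0$ but $J \neq 0 \neq K$, which is precisely the failure of relative irreducibility. Combining with Theorem \ref{chmi} gives the stated equivalence for $n \geqslant 2$.

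There is no real obstacle here: the content is entirely packaged in Theorem \ref{chmi} and the behavior of minimal ideals under intersection. The only point requiring a sliver of care is making sure the vacuous-case reasoning in the $n = 1$ situation is spelled out, and — if one wants to be scrupulous — checking that $\mathrm{Soc}(R)$ itself is nonzero whenever $n \geqslant 1$ (it contains a minimal ideal), so that the word ``essential'' in the theorem statement is not being applied to the zero ideal in the relevant cases. I would also remark in passing that the $n \geqslant 2$ half shows the phenomenon is genuine: e.g.\ in $\mathds{Z}_{30}$ one has three minimal ideals and $\mathrm{Soc}(\mathds{Z}_{30}) = \mathds{Z}_{30}$ is essential, whereas one can arrange rings with several minimal ideals whose socle is a non-essential, non-$\m$ ideal.
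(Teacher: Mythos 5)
Your proof is correct and follows essentially the same route as the paper's: a case split on $n$, with the $n\leqslant 1$ cases handled by noting the socle is zero or a single minimal (hence relatively irreducible) ideal, and the $n\geqslant 2$ case by showing two distinct minimal ideals intersect trivially so the socle fails to be relatively irreducible, whence Theorem \ref{chmi} reduces the claim to essentiality. The extra remarks on vacuity and nonvanishing of the socle are fine but add nothing beyond what the paper's argument already contains.
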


\begin{proof}
If $L$ has no minimal ideals, then $\mathrm{Soc}(L)$ is trivially an $\m$-ideal. If $R$ has only one minimal ideal $I$, then $\mathrm{Soc}(L)=I$, is relatively irreducible and hence, by Theorem \ref{chmi}, is an $\m$-ideal. If $R$ has more than one minimal ideal, then $\mathrm{Soc}(L)$ is not relatively irreducible, as the intersection of any two distinct minimal ideals $I$ and $J$ is $0$. Indeed: $I\cap J\subseteq I$, whence $I\cap J=0$ or $I\cap J=I$. But if $I\cap J=I$, then $I\subseteq J$, whence $I=J$. Therefore, if $R$ has more than one minimal ideal, then by Theorem \ref{chmi}, $\mathrm{Soc}(R)$ is an $\m$-ideal precisely when it is essential.
\end{proof} 

Using Theorem \ref{chmi}, we shall now characterize $\m$-ideals of rings $\mathds{Z}_n$. 

\begin{theorem}\label{charzn}
Consider a ring $\mathds{Z}_n$ with  $n=p_1^{m_1}\cdot \ldots \cdot p_k^{m_k}$. Then a nontrivial ideal
$$I:=\left(p_1^{m'_1}\cdot \ldots \cdot p_k^{m'_k}\right)$$
is an $\m$-ideal  if and only if there are not $i$, $j$, $s \in \{1,\ldots, k\}$ such that $i\neq j$, $m'_i < m_i$, $m'_j < m_j$, and $m'_s=m_s$. 
In particular, $I$ is essential if and only if for every $i$ we have $m'_i < m_i$.
\end{theorem}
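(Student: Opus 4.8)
The plan is to apply the characterization of Theorem~\ref{chmi} and reduce everything to bookkeeping with exponent vectors, via the well-known lattice of ideals of $\mathds{Z}_n$. I would first record the dictionary: every ideal of $\mathds{Z}_n$ is $(d)$ for a unique divisor $d\mid n$; for divisors $a,b$ of $n$ one has $(a)\subseteq(b)\iff b\mid a$, $(a)\cap(b)=(\operatorname{lcm}(a,b))$, and $(e)=0$ iff $e=n$. Writing $d=\prod_i p_i^{m'_i}$ and a general divisor of $n$ as $\prod_i p_i^{c_i}$ with $0\le c_i\le m_i$, intersection of ideals becomes the coordinatewise maximum of exponents and containment becomes the coordinatewise order. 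Setting $S:=\{\,i:m'_i<m_i\,\}$, the configuration forbidden in the statement says exactly that $|S|\ge 2$ and $S\ne\{1,\dots,k\}$, so the goal becomes: $I$ is an $\m$-ideal iff $S=\{1,\dots,k\}$ or $|S|\le 1$. By Theorem~\ref{chmi} it then suffices to prove that $I=(d)$ is essential iff $S=\{1,\dots,k\}$, and relatively irreducible iff $|S|\le 1$.

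For essentiality, I would observe that $I$ fails to be essential exactly when some nonzero $(e)$ meets it in $0$, i.e.\ when there is a divisor $e\ne n$ of $n$ with $\operatorname{lcm}(d,e)=n$. An index $i\in S$ forces $e_i=m_i$, while $e\ne n$ forces $e_t<m_t=\max(m'_t,e_t)$, hence $m'_t=m_t$, for some $t$; conversely, if some $s\notin S$ one simply takes $e_s=0$ and $e_l=m_l$ for $l\ne s$. So $I$ is essential iff $S=\{1,\dots,k\}$, which in passing proves the final ``in particular'' clause of the theorem.

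For relative irreducibility, $I$ fails to be relatively irreducible exactly when there are divisors $a,b$ of $n$ with $d\mid a$, $d\mid b$, $a\ne n$, $b\ne n$, and $\operatorname{lcm}(a,b)=n$. If $a_i<m_i$ then, as in the essentiality step, $b_i=m_i$ and $m'_i<m_i$, so $i\in S$; symmetrically the index $j$ with $b_j<m_j$ lies in $S$, and $i\ne j$ since $\max(a_i,b_i)=m_i$. Conversely, given distinct $i,j\in S$, I would set $a_i=m'_i$ with $a_l=m_l$ for $l\ne i$ and $b_j=m'_j$ with $b_l=m_l$ for $l\ne j$, and check directly that $d\mid a$, $d\mid b$, both are proper, and $\operatorname{lcm}(a,b)=n$. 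Hence $I$ is relatively irreducible iff $|S|\le 1$, and combining the two equivalences through Theorem~\ref{chmi} closes the argument.

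I do not expect a genuine obstacle: the whole proof is elementary divisibility bookkeeping. The only points that need a little care are verifying that the divisors $a,b$ produced in the converse half of the relative-irreducibility step are simultaneously proper and still have $\operatorname{lcm}$ equal to $n$, and keeping in mind that the nontriviality hypothesis on $I$ is not actually used (it only guarantees $S\ne\emptyset$, so that the ``$|S|\le 1$'' alternative becomes ``$|S|=1$'', while the stated equivalence holds verbatim even for $I=0$ or $I=R$).
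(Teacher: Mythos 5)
Your proof is correct. The computational core is the same as the paper's --- the dictionary between ideals of $\mathds{Z}_n$ and exponent vectors of divisors, intersection as coordinatewise maximum, and essentially the same pair of proper divisors $a,b$ witnessing failure in the forbidden case --- but the logical organization is genuinely different and cleaner. The paper proves the essentiality criterion and then, for a non-essential $I$ avoiding the forbidden configuration, verifies the $\m$-ideal definition \emph{directly}, splitting into a subcase where $I$ is a minimal ideal and a subcase where the unique index $i$ with $m'_i<m_i$ satisfies $m'_i<m_i-1$, handled by an ad hoc argument about the exponent of $p_i$ in the generators of $J$ and $L$; Theorem~\ref{chmi} is never actually invoked there. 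You instead translate \emph{both} halves of the dichotomy of Theorem~\ref{chmi} into statements about $S=\{\,i: m'_i<m_i\,\}$ (essential iff $S=\{1,\dots,k\}$; relatively irreducible iff $|S|\le 1$) and let that theorem absorb the case analysis, which removes the paper's subcase split entirely. Your observation that any forbidden triple $(i,j,s)$ automatically has $s\notin\{i,j\}$, so the forbidden condition is exactly ``$|S|\ge 2$ and $S\ne\{1,\dots,k\}$,'' is the key reformulation, and the remaining checks you flag (properness of $a$ and $b$, $\operatorname{lcm}(a,b)=n$, the harmlessness of dropping nontriviality) all go through as you describe.
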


\begin{proof}  
We first prove that $I$ is essential if and only if for every $i$ we have $m'_i < m_i$.
If $m'_i < m_i$, given an ideal $J\neq 0$ with $J=(p_1^{m''_1}\cdot \ldots \cdot p_k^{m''_k})$, we obtain 
$$I \cap J= \left(p_1^{\operatorname{max}\{m'_1,m''_1\}}\cdot \ldots \cdot p_k^{\operatorname{max}\{m'_k,m''_k\}}\right).$$
Since $J\neq 0$, there exists $j\in \{1,\ldots, k\}$ such that $m''_j < m_j$ and so $I\cap J\neq 0$ since $p_j$ appears with exponent $\operatorname{max}\{m'_j,m''_j\}< m_j$ in the unique factorization of the generator of $I \cap J$. Conversely, if $I$ is essential, then for every $i\in\{1,\ldots, k\}$ we must have $$I \cap \left(p_1^{m_1}\cdot \ldots \cdot p_i^{m_i-1} \cdot \ldots \cdot p_k^{m_k}\right) \neq 0,$$ and thus $\operatorname{max}\{m_i-1, m'_i\} < m_i$. So, it must be $m'_i < m_i$. 

Let us now consider the case in which $I$ is not essential, and thus there exists $s\in \{1,\ldots,k\}$ such that $m'_s=m_s$. Suppose there are not $i$, $j\in \{1,\ldots, k\}$ such that $i\neq j$, $m'_i < m_i$, $m'_j < m_j$. 
If $m'_j=m_j$ for every $j\neq i$ and $m'_i=m_i -1$, then $I$ is a minimal ideal and thus, an $\m$-ideal. Indeed, if an ideal \[J=\left(p_1^{m''_1}\cdot \ldots \cdot p_k^{m''_k}\right)\] is  contained in $I$ we must have $m_j \leqslant m''_j$ (so $m_j = m''_j$) for every $j\neq i$ and $m_i-1 \leqslant m''_i$ and thus either $I=J$, or $J=0$. Otherwise, there must exist $i\in \{1, \ldots, k\}$ such that $m'_j=m_j$ for every $j\neq i$ and $m'_i < m_i-1$. Let $J$, $L$ be ideals of $\mathds{Z}_n$ such that $J\cap L \neq 0$, $I \cap J \neq 0$ and $I \cap L \neq 0$. Since $I \cap J \neq 0$, the prime $p_i$ must appear in the factorization of the generator of $J$ with an exponent strictly smaller than $m_i$. And analogously for the generator of $L$. This implies that $I \cap J\cap L \neq 0$ since the exponent of the prime $p_i$ in the unique factorization of the generator of $I \cap J\cap L$ is strictly less than $m_i$. This proves that $I$ is an $\m$-ideal. 

Suppose now that there exists $i,j\in \{1,\ldots, k\}$ such that $i\neq j$, $m'_i < m_i$, $m'_j < m_j$ and consider the following ideals: 
$$J=\left(p_1^{m_1}\cdot \ldots \cdot p_i^{m_i} \cdot \ldots \cdot p_j^{m'_j} \cdot \ldots \cdot p^0_s\cdot \ldots \cdot p_k^{m_k}\right) \quad \text{ and } \quad L=\left(p_1^{m_1}\cdot \ldots \cdot p_i^{m'_i} \cdot \ldots \cdot p_j^{m_j} \cdot \ldots \cdot p^0_s\cdot \ldots \cdot p_k^{m_k}\right).$$
Then, since $m_s>0$, we have 
$$J\cap L=\left(p_1^{m_1}\cdot \ldots \cdot p_i^{m_i} \cdot \ldots \cdot p_j^{m_j} \cdot \ldots \cdot p^0_s\cdot \ldots \cdot p_k^{m_k}\right) \neq 0.$$
Moreover, since since $m'_j < m_j$, we have $$I \cap J=\left(p_1^{m_1}\cdot \ldots \cdot p_i^{m_i} \cdot \ldots \cdot p_j^{m'_j} \cdot \ldots \cdot p_s^{m_s}\cdot \ldots \cdot p_k^{m_k}\right)\neq 0$$  and analogously, $I \cap L\neq 0$ because $m'_i < m_i$. But clearly $I\cap J\cap L=0$. So, $I$ is not an $\m$-ideal.
\end{proof}

\begin{remark}
Let us now apply the above result to show that the product of $\m$-ideals may not be an $\m$-ideal.	Consider the ring $\mathds{Z}_{900}$. Since $900=2^2\cdot 3^2 \cdot 5^2$, by Theorem \ref{charzn}, the ideals $(6)=(2\cdot 3)$ and $(10)=(2\cdot 5)$ are $\m$-ideals. However, again by Theorem \ref{charzn}, $(6)\cdot(10)=(60)=(2^2\cdot 3 \cdot 5)$ is not a $\m$-ideal. 

Also, notice that a maximal ideal of a ring may not be an $\m$-ideal.	Consider the ring $\mathds{Z}_{180}$. The ideal $(5)$ is maximal but is not an $\m$-ideal by Theorem \ref{charzn}. Also, the Jacobson radical, $\mathrm{Jac}(R)$, may not be an $\m$-ideal. For example, $\mathrm{Jac}(\mathds{Z}_{180})=(2\cdot 3 \cdot 5)=(30)$, is not a $\m$-ideal, again thanks to Theorem \ref{charzn}.  
\end{remark}

Let $C$ be an $R$-module and $A$ be a submodule of $C$. Recall from \cite{GW04} that $A$ is said to be
\emph{essentially closed in} $C$ provided $A$ has no proper essential extensions within
$C$. Motivated by this, we now introduce a similar notion in our context and show its relation with $\m$-ideals.

\begin{definition}
We say an ideal $I$ is called \emph{essentially closed} (resp.\ \emph{$\m$-closed}) in a ring $R$, if $I$ is an essential ideal  (resp.\ an $\m$-ideal) in an ideal $J$ of $R$, then $I=J$.     
\end{definition}

\begin{theorem}\label{charmuclosed}
Let $R$ be a ring and let $I$ be a proper ideal of  $R$. The following are equivalent:
\begin{enumerate}
\item $I$ is $\m$-closed;

\item $I$ is essentially closed and not relatively irreducible.
\end{enumerate}
\end{theorem}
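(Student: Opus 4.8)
The plan is to reduce everything to Theorem \ref{chmi}, but applied not to $R$ itself but to the sub-ideals $J$ of $R$ that occur in the definitions of essentially/$\m$-closed. The structural fact I will use throughout is elementary: if $I\subseteq J$ are ideals of $R$, then $I$ is an ideal of the ring $J$, and moreover \emph{any} ideal $A$ of $R$ with $A\subseteq I$ is again an ideal of $J$ (since $JA\subseteq RA\subseteq A$ and $AJ\subseteq AR\subseteq A$). So ``relatively irreducible'' can only get \emph{harder} to satisfy as the ambient ring shrinks from $R$ down to $J$, whereas ``essential'' genuinely depends on $J$; this asymmetry is the engine of the proof. Concretely, by Theorem \ref{chmi} applied inside the (possibly nonunital) ring $J$: $I$ is an $\m$-ideal of $J$ if and only if $I$ is essential in $J$ or $I$ is relatively irreducible as an ideal of $J$.

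First I would prove $(1)\Rightarrow(2)$. For essential closedness: if $I$ is essential in an ideal $J$ of $R$, then by Proposition \ref{eim} applied in the ring $J$, $I$ is an $\m$-ideal of $J$, so $\m$-closedness forces $I=J$. For ``not relatively irreducible'': if $I$ were relatively irreducible in $R$, then Theorem \ref{chmi} would say $I$ is an $\m$-ideal of $R$; but $R$ is an ideal of itself and $I\neq R$ since $I$ is proper, so this contradicts $I$ being $\m$-closed.

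Then I would prove $(2)\Rightarrow(1)$. Let $J$ be an ideal of $R$ with $I\subseteq J$ such that $I$ is an $\m$-ideal of $J$; I must show $I=J$. By Theorem \ref{chmi} in $J$, either $I$ is essential in $J$ — and then $I=J$ because $I$ is essentially closed — or $I$ is relatively irreducible as an ideal of $J$. The latter is impossible: since $I$ is not relatively irreducible in $R$, there are nonzero ideals $A,B$ of $R$ with $A,B\subseteq I$ and $A\cap B=0$, and by the structural fact above $A$ and $B$ are nonzero ideals of $J$ contained in $I$ with $A\cap B=0$, contradicting relative irreducibility of $I$ in $J$. Hence $I=J$, so $I$ is $\m$-closed. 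The only point requiring genuine care — the ``hard part'', such as it is — is precisely this transfer of ideals between $R$ and its sub-ideal $J$: checking that relative irreducibility descends from $R$ to $J$ while essentiality need not, which is exactly why the ``not relatively irreducible'' clause in $(2)$ is present and cannot be dropped. Everything else is a direct appeal to Theorem \ref{chmi} and Proposition \ref{eim}, noting that Theorem \ref{chmi} is stated for not-necessarily-unital rings and hence applies legitimately to the ring $J$.
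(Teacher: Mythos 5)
Your proof is correct and follows essentially the same route as the paper's: both directions reduce to Theorem \ref{chmi} applied inside the sub-ideal $J$, with the key transfer being that ideals of $R$ contained in $I$ are automatically ideals of $J$, so relative irreducibility in $J$ implies relative irreducibility in $R$ (you phrase this in the contrapositive). Your explicit justification of that transfer, and of why Theorem \ref{chmi} legitimately applies to the possibly nonunital ring $J$, is slightly more careful than the paper's, but the argument is the same.
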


\begin{proof}
(1)$\Rightarrow$(2): Let $I$ be $\m$-closed. Since every essential ideal is an $\m$-ideal, clearly $I$ is essentially closed. Moreover, $I$ is not irreducible. Indeed, if $I$ is irreducible in $R$, by Theorem \ref{chmi}, $I$ is an $\m$-ideal of $R$. Since $I\neq R$, this implies that $I$ is not $\m$-closed, which is a contradiction. Hence, $I$ is not relatively irreducible in $R$.
		
(2)$\Rightarrow$(1): Assume that $I\neq R$ is essentially closed and not relatively irreducible. Let $J$ be an ideal of $R$ such that $I$ is an $\m$-ideal of $J$. We show that $J=I$. By Theorem \ref{chmi},  $I$ is either relatively irreducible in $J$ or it is essential in $J$. If $I$ is relatively irreducible in $J$, then $I$ is relatively irreducible in $R$ as well, which is a contradiction. If $I$ is essential in $J$, then $I=J$ because $I$ is essentially closed. Hence, $I$ is $\m$-closed. 
\end{proof}

We shall now see an example of a ring with both essential and minimal $\m$-ideals.

\begin{theorem}\label{ck} 
The nonzero ring $\m$-ideals in $C(K)$ are precisely either the essential ideals or the minimal ideals singly generated by the characteristic function
of an isolated point. 
\end{theorem}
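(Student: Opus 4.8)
The plan is to reduce everything to Theorem~\ref{chmi}, which says that a nonzero ideal of $C(K)$ is an $\m$-ideal precisely when it is essential or relatively irreducible. Essential ideals already appear in the asserted list, and any minimal ideal of the form $\bF\chi_{\{x\}}$ with $x$ an isolated point is relatively irreducible (by the Remark following Theorem~\ref{chmi}), hence an $\m$-ideal. So the only substantive thing to prove is the reverse direction: if $I$ is a nonzero relatively irreducible ideal of $C(K)$ that is not essential, then $I = \bF\chi_{\{x\}}$ for some isolated point $x\in K$. (Here $K$ is compact Hausdorff; actually I only use that $K$ is Hausdorff and regular.)

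For this I would work entirely with cozero sets, which avoids any structure theory of the (possibly non-closed) ideals of $C(K)$. For $f\in C(K)$ put $\operatorname{coz}(f) = \{t\in K : f(t)\neq 0\}$, an open set, and for an ideal $I$ put $\operatorname{coz}(I) = \bigcup_{f\in I}\operatorname{coz}(f)$. The key localisation lemma is: for every $x\in\operatorname{coz}(I)$ and every open $V\ni x$ there is $g\in I$ with $g(x)\neq 0$ and $\operatorname{coz}(g)\subseteq V$. To see this, pick $f\in I$ with $f(x)\neq 0$, use regularity of $K$ to choose an open $U$ with $x\in U\subseteq\overline{U}\subseteq V$, apply Urysohn's lemma to get $\phi\in C(K)$ with $\phi\equiv 1$ on $\overline{U}$ and $\phi\equiv 0$ off $V$, and set $g=\phi f\in I$; then $g(x)=f(x)\neq 0$ and $\operatorname{coz}(g)=\operatorname{coz}(\phi)\cap\operatorname{coz}(f)\subseteq V$. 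Given this, for any open $V$ the set $I_V := \{f\in I : \operatorname{coz}(f)\subseteq V\}$ is an ideal of $C(K)$ contained in $I$ (closed under addition since $\operatorname{coz}(f+g)\subseteq\operatorname{coz}(f)\cup\operatorname{coz}(g)$, and under multiplication by $C(K)$ since $\operatorname{coz}(hf)\subseteq\operatorname{coz}(f)$), it is nonzero whenever $V\cap\operatorname{coz}(I)\neq\emptyset$, and for disjoint open $V_1,V_2$ one has $I_{V_1}\cap I_{V_2}=0$ because any common element would have cozero set inside $V_1\cap V_2=\emptyset$.

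Now suppose $I$ is relatively irreducible and nonzero, so $\operatorname{coz}(I)\neq\emptyset$. If $\operatorname{coz}(I)$ contained two distinct points $x,y$, Hausdorffness gives disjoint open sets $V_1\ni x$, $V_2\ni y$, and then $I_{V_1},I_{V_2}$ are nonzero ideals contained in $I$ with $I_{V_1}\cap I_{V_2}=0$, contradicting relative irreducibility. Hence $\operatorname{coz}(I)=\{x\}$ is a single point. Taking $0\neq f\in I$ forces $\emptyset\neq\operatorname{coz}(f)\subseteq\{x\}$, so $\operatorname{coz}(f)=\{x\}$; since $\operatorname{coz}(f)$ is open, $x$ is isolated, so $\chi_{\{x\}}\in C(K)$ and $\chi_{\{x\}}=f(x)^{-1}f\in I$. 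Every $g\in I$ has $\operatorname{coz}(g)\subseteq\{x\}$, hence $g=g(x)\chi_{\{x\}}$; thus $I=\bF\chi_{\{x\}}$, the minimal ideal singly generated by the characteristic function of the isolated point $x$. For the converse inclusions: essential ideals are $\m$-ideals by Proposition~\ref{eim}, and $\bF\chi_{\{x\}}$ with $x$ isolated is a minimal ideal, hence an $\m$-ideal by Proposition~\ref{bpmi}(\ref{minm}); one should also note the degenerate case $K=\{x\}$, where $C(K)=\bF$ and $\bF\chi_{\{x\}}=C(K)$ is itself essential, so the two families in the statement may overlap but the conclusion still holds.

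I expect the main obstacle to be the localisation lemma and its bookkeeping, rather than the final combinatorics: one must take care that the bump function $\phi$ can be chosen with $\operatorname{supp}\phi\subseteq V$ (this is where regularity of $K$, fitting a closed neighbourhood of $x$ inside $V$, is essential) so that $\operatorname{coz}(\phi f)\subseteq V$, and that the auxiliary sets $I_V$ are genuinely ideals of the whole ring $C(K)$ and not merely of $I$. Everything after that is routine.
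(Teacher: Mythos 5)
Your proof is correct, but it takes a genuinely different route from the paper's. The paper argues through the classification of \emph{closed} ideals of $C(K)$ as the sets $J_E$ of functions vanishing on a closed $E\subseteq K$ (the Banach-space $\m$-ideals), shows by an explicit two-ball-style construction that a non-minimal $J_E$ can only be a ring $\m$-ideal when $U=K\setminus E$ is dense, and then transfers to arbitrary (possibly non-closed) ideals via the Gillman--Jerison fact that $I$ is essential iff $\bar I$ is essential iff $Z(I)$ has empty interior. You instead apply Theorem~\ref{chmi} directly and characterize the relatively irreducible ideals of $C(K)$ from scratch: the Urysohn-based localisation lemma shows that if $\operatorname{coz}(I)$ contains two points then the ideals $I_{V_1},I_{V_2}$ of $C(K)$ sitting inside $I$ witness the failure of relative irreducibility, forcing $\operatorname{coz}(I)=\{x\}$ with $x$ isolated and $I=\bF\chi_{\{x\}}$. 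Your version is self-contained (no appeal to \cite{GJ60} or to the structure of closed ideals), handles non-closed ideals head-on rather than through their closures, and in fact proves the slightly sharper statement that \emph{every} nonzero relatively irreducible ideal of $C(K)$ is of the minimal form $\bF\chi_{\{x\}}$; the paper's version, by contrast, makes the connection with the Banach-space $\m$-ideal picture explicit and reuses standard $C(K)$ machinery. Both are complete proofs; yours is arguably the cleaner argument for the purely ring-theoretic statement.
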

	
\begin{proof} As noted in Remark \ref{cke}, the closed ideals, which are the Banach space $\m$-ideals of the ring $C(K)$, are the subspaces $J_E$ of functions vanishing on a closed subset  $E \subset K$. 
If $E = K \setminus \{ x \}$
for $x \in K$ then $J_E$ satisfies the conditions of Proposition \ref{bpmi}(\ref{minm}), so is an $\m$-ideal.  
Let $E \subset K$ be a closed subset such that 
 $J_E$ is a ring $\m$-ideal.  
If $E =  K \setminus U$ and $U$ is not a singleton (the case 
above), then we claim that $U$ is dense.
For if not, then there is a nonempty open set 
$W \subset E$.   Any points $x \neq y$ in $U$ have disjoint neighborhoods $W_1, W_2$ inside  $U$.  Set $V_k = W \cup W_k$, and $E_k = K \setminus V_k$, with $I_k = J_{E_k}$.  Then $\cup_k \, E_k \neq K$, so that $\bigcap_{k=1}^n I_k\neq 0$, and $E \cup E_k \neq K$ for each $k$, but $E \cup \cup_k \, E_k = K$. That is, $I \cap \bigcap_{k=1}^n I_k = 0$.
Thus, $U$ is dense. 

It is known \cite{GJ60}  that a nonzero ideal $I$  in $C(K)$ is essential if and only if $\bar{I}$ is essential and if and only if the interior of $Z(I) = Z(\bar{I})$ is empty. 
Here \[Z(I) = \bigcap_{f \in I} \, f^{-1}(\{ 0 \}).\]   If $\bar{I} = J_E$ then $E = Z(\bar{I})$, and the interior of $E$ is empty if and only if 
$U = K \setminus E$ is dense.   See also II.5.4.7 in \cite{Bla06}. If $I$ is a ring $\m$-ideal in $C([0,1])$
which is not of the form in the first paragraph of this proof, then by the above, $\bar{I}$ is an
essential ideal, so that $I$ is an essential ideal.

Thus, the nonzero ring $\m$-ideals in $C(K)$ are the essential ideals or the minimal ideals singly generated by the characteristic function 
of an isolated point.  
\end{proof}

Thus, for example, the ring $\m$-ideals in $C([0,1])$ are exactly the essential ideals.
This is because $[0,1]$ has no isolated points.

\section{In C*-algebras}

The corresponding problem for a noncommutative unital  C*-algebra $B$ is also interesting. 
As a first step, one can show that an ideal J in $B$ is essential in the algebraic sense if and only if $\bar{J}$ is essential in the C*-sense.  Indeed, this follows very easily from:

\begin{lemma} \label{l2i} For nonzero two-sided ideals $I, J$ in a  C*-algebra $B$ we have $\overline{I \cap J} = \bar{I} \cap \bar{J}$.
\end{lemma}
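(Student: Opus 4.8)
The inclusion $\overline{I \cap J} \subseteq \bar I \cap \bar J$ is trivial since $I \cap J \subseteq I \subseteq \bar I$ and likewise for $J$, and $\bar I \cap \bar J$ is closed. So the plan is to establish the reverse inclusion $\bar I \cap \bar J \subseteq \overline{I \cap J}$. The key algebraic fact I would invoke is that $I \cap J = IJ = \overline{IJ}^{\,\cap}$-type identities fail for merely algebraic ideals, but for \emph{closed} two-sided ideals of a C*-algebra one has $\bar I \cap \bar J = \bar I \bar J$ (the product of closed ideals, as a closed ideal, equals their intersection — this is standard, e.g.\ via approximate identities). So the first step is: reduce to showing $\bar I \bar J \subseteq \overline{I \cap J}$, since then $\bar I \cap \bar J = \bar I \bar J \subseteq \overline{I \cap J} \subseteq \bar I \cap \bar J$ forces equality.

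The second step is to show $\bar I \bar J \subseteq \overline{IJ}$ and then $IJ \subseteq I \cap J$. The latter is immediate: $IJ \subseteq I$ and $IJ \subseteq J$ because $I, J$ are two-sided ideals. For the former, take $a \in \bar I$, $b \in \bar J$; approximate $a$ by $a_n \in I$ and $b$ by $b_n \in J$ in norm, so $a_n b_n \in IJ$ and $a_n b_n \to ab$. Hence $ab \in \overline{IJ} \subseteq \overline{I \cap J}$, and since $\overline{I\cap J}$ is closed and linear, all of $\bar I \bar J$ (the closed linear span of such products) lies in $\overline{I \cap J}$.

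The one genuine input I am leaning on — and the step most worth pinning down carefully — is the identity $\bar I \cap \bar J = \bar I \bar J$ for closed two-sided ideals of a C*-algebra. The clean way to see this: $\bar I \bar J \subseteq \bar I \cap \bar J$ always; conversely if $x \in \bar I \cap \bar J$ then $x = \lim_\lambda e_\lambda x$ for an approximate identity $(e_\lambda)$ of $\bar I$, and each $e_\lambda x \in \bar I \cdot (\bar I \cap \bar J) \subseteq \bar I \bar J$ (using $x \in \bar J$ and $\bar I \bar J$ closed), so $x \in \bar I \bar J$. Alternatively one can quote II.5.1.x of \cite{Bla06}. If one prefers to avoid approximate-identity machinery entirely, an even more elementary route is available: for $x \in \bar I \cap \bar J$, consider $x^* x \in \bar I \cap \bar J$ (still a positive element of both closed ideals), write $x^*x = y^2$ with $y = (x^*x)^{1/4}\cdot(x^*x)^{1/4} \in \bar I \cap \bar J$ by continuous functional calculus, and then $x = \lim x (1/n + x^*x)^{-1/2} x^*x$-style factorizations realize $x$ as a limit of products from $I$ and $J$. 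Either way, the substantive content is this factorization/approximate-identity argument; the rest is formal. I would present the approximate-identity version as the cleanest, noting it is where the C*-structure (as opposed to mere Banach-algebra structure) is actually used.
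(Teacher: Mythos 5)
Your main argument is correct and is essentially the paper's proof: both rest on applying an approximate identity of $\bar{I}$ to an element of $\bar{I}\cap\bar{J}$ and then using density of $I$ in $\bar{I}$ and of $J$ in $\bar{J}$ so that the relevant products land in $IJ\subseteq I\cap J\subseteq\overline{I\cap J}$ --- the paper merely perturbs the approximate identity into $I$ at the outset, whereas you keep it in $\bar{I}$, quote $\bar{I}\cap\bar{J}=\bar{I}\bar{J}$, and approximate both factors afterward. (Only your optional ``even more elementary route'' is off: $x(1/n+x^*x)^{-1/2}x^*x$ converges to $x|x|$ rather than to $x$ --- but you correctly do not rely on it.)
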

	
\begin{proof} Any contractive approximate identity (cai) for $\bar{I}$ may be approximated to give a cai $(e_t)$ for $I$ which is also a
cai for $\bar{I}$.  If $x \in \bar{I} \cap \bar{J}$, with bounded 
approximation $j_s \to x$ with 
$j_s \in J$, then $$x = \lim_t e_t x = \lim_t \lim_s \, e_t j_s.$$ This
is in $\overline{I \cap J}$, since
$$e_t j_s \in I J \subseteq I \cap J \subseteq \overline{I \cap J}.$$
It follows that $\overline{I \cap J} = \bar{I} \cap \bar{J}$.
\end{proof} 

The same proof works in a Banach algebra, provided that 
$\bar{I}$ has a bounded approximate identity.  This must be well-known.        

We recall that a C*-algebra $A$ is {\em prime} if it does not contain two proper closed orthogonal two-sided ideals.
For example, $B(l^2)$ is prime since it has a unique closed two-sided ideal.   If $A$ contains two nonzero central positive elements with product 0, then $A$  is not prime.  However, this condition does not characterize (non)primeness.  Rather, $A$ is prime if and only if for all nonzero $x$, $y \in A$ we have $xAy \neq 0$ \cite[Proposition II.5.4.5]{Bla06}.  Also, $A$ is prime if and only if every nonzero ideal of $A$ is an essential ideal.  In the separable case, primeness is equivalent to the existence of a faithful irreducible representation \cite{Bla06}.  

\begin{proposition}
\label{proC} For a nonzero  ideal $J$ in a   C*-algebra the following conditions are equivalent:
\begin{enumerate}
\item $J$ is relatively irreducible.
		
\item  $\bar{J}$ is a prime C*-algebra
\item $x J y \neq 0$ for nonzero $x, y \in J$.
\end{enumerate}
\end{proposition}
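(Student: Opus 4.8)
The strategy is to prove the cycle $(1)\Leftrightarrow(3)$ and $(2)\Leftrightarrow(3)$, using Lemma \ref{l2i} to pass between algebraic and closed ideals. First I would unpack the definitions. Condition $(3)$ says $xJy\neq 0$ whenever $x,y\in J$ are nonzero; since $J$ is a (two-sided) ideal, $xJy\subseteq J$, so this is an intrinsic statement about the ring $J$. The cited criterion $A$ prime $\iff$ $xAy\neq 0$ for all nonzero $x,y\in A$ (\cite[Proposition II.5.4.5]{Bla06}) applied to the C*-algebra $\bar J$ gives: $\bar J$ is prime $\iff$ $x\bar J y\neq 0$ for all nonzero $x,y\in\bar J$. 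So the equivalence $(2)\Leftrightarrow(3)$ will follow once I show that $xJy\neq 0$ for all nonzero $x,y\in J$ is equivalent to $x\bar J y\neq 0$ for all nonzero $x,y\in\bar J$. The forward direction of that is a density/continuity argument: if $x,y\in\bar J$ are nonzero, approximate them by $x_n,y_n\in J$; one must be slightly careful because $x_nJy_n\neq 0$ does not immediately give $x\bar Jy\neq 0$, so instead I would use a cai $(e_t)$ for $J$ (as in Lemma \ref{l2i}) to write $x=\lim_t e_t x e_t$ (or use $x^*x\neq0$) and reduce to elements of $J$; alternatively, observe $x\bar Jy=0$ forces $xJy=0$, and then a cai argument forces $x\bar J\bar J y = 0$, hence (since $\bar J$ has a cai) $xy'=0$ issues — cleaner is: $x\bar Jy=0\Rightarrow x(\bar J^2)y=0\Rightarrow x\bar Jy=0$ is automatic, so instead take $x'\in J$ near $x$, $y'\in J$ near $y$, both nonzero, and note $x'Jy'\subseteq x'\bar Jy'$, but we need the reverse containment direction — so the honest route is: $x\bar Jy=\{0\}$ with $x$ nonzero in $\bar J$; pick $a\in J$ with $xa\neq 0$ (possible since $xJ$ is dense in $x\bar J\neq 0$ as $x\in\bar J$ and $J$ has a cai), set $x_0=xa\in J$ nonzero; similarly pick $b\in J$ with $by\neq 0$, $y_0=by\in J$; then $x_0Jy_0=xaJby\subseteq x\bar Jy=\{0\}$, contradicting $(3)$. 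The reverse implication, $x\bar Jy\neq 0$ for all nonzero $x,y\in\bar J$ implies $xJy\neq0$ for all nonzero $x,y\in J$, is immediate since $J\subseteq\bar J$.

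Next, $(1)\Leftrightarrow(3)$. Recall $J$ is relatively irreducible means: for ideals $M,N$ of $R$ with $M\subseteq J$, $N\subseteq J$, and $M\cap N=0$, one has $M=0$ or $N=0$. For the direction $(3)\Rightarrow(1)$: given such $M,N$ with $M,N$ nonzero and $M\cap N=0$, pick nonzero $x\in M$, $y\in N$; then $xJy\subseteq xJ\subseteq M$ (as $M$ is an ideal of $R$ and $x\in M$, wait — I need $xJy\subseteq M$ and $\subseteq N$; since $M$ is an ideal of $R$, $x\in M$ gives $xJy\subseteq M$; since $N$ is an ideal of $R$, $y\in N$ gives $xJy\subseteq N$) hence $xJy\subseteq M\cap N=0$, contradicting $(3)$. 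For $(1)\Rightarrow(3)$: suppose $xJy=0$ for some nonzero $x,y\in J$. I want nonzero ideals $M,N$ of $R$, both inside $J$, with $M\cap N=0$. In a C*-algebra one uses positivity: replacing $x$ by $x^*x$ and $y$ by $yy^*$, which are still nonzero elements of $J$ (and $x^*x\,J\,yy^*=0$ follows from $xJy=0$ by a standard C*-manipulation, e.g.\ $x^*x a yy^* $ — need $\| x^*xayy^*\|$ controlled; actually $xJy=0$ with $J$ an ideal and $a\in J$: then $(xa)(y)=0$? not quite). The clean approach: from $xJy=0$ and the fact that $\bar J$ is a C*-algebra with cai, deduce $\overline{JxJ}\cdot\overline{JyJ}$-type orthogonality. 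Set $M=\overline{RxR}\cap J$ — hmm, but we need these genuinely inside $J$ and the closure operations to stay in $J$. Since $x,y\in J$ and $J$ is an ideal, $RxR\subseteq J$ and $RyR\subseteq J$ already (no closure needed for the algebraic ideals $M:=RxR$, $N:=RyR$, which are two-sided ideals of $R$ contained in $J$). Then $M\cdot N=(RxR)(RyR)=Rx(RRyR)R\subseteq Rx\bar JyR$ after noting $RRyR\subseteq J\subseteq\bar J$; and $x\bar Jy$: we have $xJy=0$, and by Lemma \ref{l2i}-style continuity $x\bar Jy=\overline{xJy}=0$. So $MN=0$, whence $M\cap N$ is a two-sided ideal with $(M\cap N)^2\subseteq MN=0$; since $R$ (a C*-algebra) has no nonzero nilpotent ideals, $M\cap N=0$. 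And $M,N$ are nonzero since $x,y\neq 0$. This contradicts $(1)$, completing $(1)\Rightarrow(3)$.

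The main obstacle is the passage between the algebraic ideal $J$ and its closure $\bar J$ — specifically, verifying identities like $x\bar Jy=\overline{xJy}$ and locating, inside $J$, nonzero elements witnessing the relevant products, which requires using the contractive approximate identity for $J$ (the technology already deployed in Lemma \ref{l2i}). One should also double-check that all the ideals produced ($RxR$, $RyR$) are legitimately two-sided ideals of $R$ contained in $J$ so that relative irreducibility applies, and that "no nonzero nilpotent two-sided ideals" is invoked correctly for C*-algebras (it follows from $\|z^*z\|=\|z\|^2$, so a nilpotent $\Rightarrow$ zero argument on the positive part). Everything else is routine C*-bookkeeping, and the final statement then reads off by combining with Proposition \ref{bpmi}, Theorem \ref{chmi}, and the already-noted fact that $J$ essential $\iff$ $\bar J$ essential.
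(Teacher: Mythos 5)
Your proposal is correct in substance, but it organizes the equivalences differently from the paper. The paper pivots everything through condition (2): it proves (2)$\Rightarrow$(1), (2)$\Leftrightarrow$(3), and (1)$\Rightarrow$(2), in each case working with closed ideals of $\bar{J}$ and invoking Lemma \ref{l2i} in the form $\overline{I \cap K} = \bar{I} \cap \bar{K}$, so that nonzero closed ideals of $\bar{J}$ must meet $J$. You instead prove (1)$\Leftrightarrow$(3) directly at the element level: your (3)$\Rightarrow$(1), via the one-line observation that $xJy \subseteq M \cap N$ whenever $x \in M$, $y \in N$ for ideals $M, N$ of the ambient algebra contained in $J$, is cleaner than the corresponding passage in the paper; and for (1)$\Rightarrow$(3) you manufacture the nonzero ideals $BxB, ByB \subseteq J$, show their product vanishes, and conclude via the fact that a C*-algebra has no nonzero nilpotent two-sided ideals (e.g.\ $zz^*z = 0$ forces $\|z\|^4 = \|zz^*zz^*\| = 0$) --- a fact the paper never needs. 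Your (2)$\Leftrightarrow$(3) is essentially the paper's, with more care taken (correctly) over transporting the condition ``$xAy \neq 0$'' between $J$ and $\bar{J}$ using a contractive approximate identity.

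One step to tighten in (1)$\Rightarrow$(3): the regrouping $(axb)(cyd) = ax(bc)yd$ requires $xBy = 0$, not merely $xJy = 0$, since $bc$ lies only in $B$; your displayed containment ``$Rx(RRyR)R \subseteq Rx\bar{J}yR$'' does not quite deliver this. The correct bridge is $xby = \lim_t x(e_t b)y$ with $(e_t)$ a cai for $\bar{J}$ and $e_t b \in \bar{J}$, which reduces everything to $x\bar{J}y \subseteq \overline{xJy} = 0$. You explicitly flag this passage between $J$ and $\bar{J}$ as the main obstacle, and the fix is exactly the Lemma \ref{l2i} technology already in the paper, so this is a presentational rather than a mathematical gap.
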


\begin{proof} For a closed ideal in a C*-algebra, `relatively irreducible' is precisely the same as being a prime C*-algebra (see \cite[Definition II.5.4.4]{Bla06}).
It follows that for a non-closed ideal $J$ with $\bar{J}$ a prime C*-algebra, Lemma \ref{l2i}  implies that $J$ is relatively irreducible.  Indeed for nonzero ideals $I, K$ in $J$ with $I\cap K=0$ we have $\bar{I} \cap \bar{K} = 0$, giving a 
contradiction.  So (2) implies (1).

The equivalence of (2) and (3) uses  \cite[Proposition II.5.4.5]{Bla06} and Lemma \ref{l2i}.  If $xJy =  0$, then by continuity $x \bar{J} y = 0$, while if $x \bar{J} y = 0$ then of course $x J y =  0$.  Hence, if $\bar{J}$ is a prime C*-algebra, then $xJy \neq 0$ for nonzero $x, y \in J$.  Conversely, suppose that (3)
holds.   Suppose that 
 $I, K$ are nonzero closed ideals in $\bar{J}$.  Then 
$I \cap J \neq 0$ or else Lemma \ref{l2i} gives a contradiction.  Similarly, $K \cap J \neq 0$.  
Suppose that $0 \neq x \in I \cap J,
0 \neq y \in K \cap J$.  Then $x \bar{J} y \subset I \cap K$.  If $I \cap K = 0$ we obtain the contradiction that $x$ or $y$ is 0.   Thus $I \cap K \neq 0$.  Thus $\bar{J}$ is a prime C*-algebra. 

Finally, suppose that (1) holds.  The argument above shows that if 
$I, K$ are nonzero closed ideals in $\bar{J}$ then 
$I \cap J \neq 0$ and  $K \cap J \neq 0$.  So $I \cap K \cap J = 0$. By Lemma  \ref{l2i} we have 
$I \cap K \cap \bar{J} = 0$.   So 
  $\bar{J}$ is a prime C*-algebra, and we have (2).
\end{proof}

The following result now follows from Theorem \ref{chmi}.

\begin{corollary}
\label{nck}
The nonzero ring $\m$-ideals in a C*-algebra $B$ are precisely either the essential ideals or the ideals $J$ whose closure is a prime  C*-algebra.   Such ideals $J$ are exactly the ideals such that $x J y \neq 0$ for nonzero $x$, $y \in J$.
\end{corollary}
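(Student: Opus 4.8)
The plan is to obtain this as an immediate consequence of Theorem \ref{chmi} and Proposition \ref{proC}, with no fresh C*-algebraic work required. First I would apply Theorem \ref{chmi}, which holds for arbitrary (not necessarily unital) rings and hence for the C*-algebra $B$: a nonzero ideal $J$ of $B$ is a ring $\m$-ideal precisely when $J$ is essential or $J$ is relatively irreducible. This already produces a dichotomy; all that remains is to translate the second alternative into C*-algebraic language.

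Second, I would invoke Proposition \ref{proC}, which says that for a nonzero ideal $J$ in a C*-algebra the three conditions ``$J$ is relatively irreducible'', ``$\bar J$ is a prime C*-algebra'', and ``$x J y \neq 0$ for all nonzero $x,y \in J$'' are equivalent. Substituting ``$\bar J$ is a prime C*-algebra'' for ``relatively irreducible'' in the conclusion of Theorem \ref{chmi} yields the first assertion of the corollary; substituting ``$xJy \neq 0$ for nonzero $x,y\in J$'' instead, and restricting attention to the ideals falling under that second alternative, yields the closing sentence. Thus the proof is essentially a two-line deduction: cite Theorem \ref{chmi}, then cite Proposition \ref{proC}.

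Since the substance has been front-loaded into Lemma \ref{l2i} and Proposition \ref{proC}, I do not anticipate a genuine mathematical obstacle; the points deserving care are bookkeeping ones. One is that the hypothesis $J \neq 0$ must be carried along throughout, both because Proposition \ref{proC} and the standard characterizations of primeness are phrased for nonzero ideals, and because the zero ideal, although an $\m$-ideal by Proposition \ref{bpmi}(1), is deliberately excluded from the statement. The other is that the disjunction ``essential, or prime closure'' is meant inclusively rather than exclusively: an ideal with prime closure is automatically essential \emph{in its own closure} (a prime C*-algebra has every nonzero ideal essential, and density together with Lemma \ref{l2i} transfers this to $J \subseteq \bar J$), though it need not be essential in $B$, as a direct-sum example shows. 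So no attempt should be made to separate the two cases, and with these caveats noted the argument is complete.
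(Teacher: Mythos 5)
Your proposal is correct and matches the paper's own derivation exactly: the corollary is stated as an immediate consequence of Theorem \ref{chmi} combined with Proposition \ref{proC}, which is precisely the two-step substitution you describe. The extra caveats you note are harmless but not needed.
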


\begin{corollary} \label{ncs} If $I$ is a ring $\m$-ideal in a unital C*-algebra $A$ and if $I \subseteq B \subseteq A$ with $B$ a
subalgebra, then $I$ is a ring $\m$-ideal in $B$.
\end{corollary}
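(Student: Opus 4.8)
The plan is to reduce to the characterizations already in hand. First note that $I$ is automatically a two-sided ideal of $B$, since $BI\subseteq AI\subseteq I$ and $IB\subseteq IA\subseteq I$. By Corollary \ref{nck}, a nonzero ring $\m$-ideal of the C*-algebra $A$ is either essential in $A$ or satisfies $xIy\neq 0$ for all nonzero $x,y\in I$ (equivalently $\bar I$ is prime, equivalently $I$ is relatively irreducible in $A$). So there are three cases to treat: $I=0$; $I$ essential in $A$; and $xIy\neq 0$ for all nonzero $x,y\in I$. In each case I will show $I$ is essential in $B$ or relatively irreducible in $B$ (or zero), which by Theorem \ref{chmi} is exactly what is needed.

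The case $I=0$ is Proposition \ref{bpmi}(1). For the third case, the key observation is that the condition ``$xIy\neq 0$ for all nonzero $x,y\in I$'' refers only to the internal multiplication of $I$ and is therefore untouched when we pass from $A$ to $B$. From it one checks directly that $I$ is relatively irreducible in $B$: if $J,K$ are ideals of $B$ with $J,K\subseteq I$ and $J\cap K=0$, and $x\in J$, $y\in K$ are nonzero, then, using $I\subseteq B$ and that $J,K$ are ideals of $B$, one gets $xIy\subseteq J$ and $xIy\subseteq K$, hence $xIy\subseteq J\cap K=0$, contradicting $xIy\neq 0$; so $J=0$ or $K=0$. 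Theorem \ref{chmi} then gives that $I$ is a ring $\m$-ideal of $B$. This uses nothing about $B$ except that it is a subring containing $I$.

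The remaining case, $I$ essential in $A$, is where the real work lies, and I expect it to be the main obstacle: essentiality is not a priori inherited by a subalgebra, since $B$ may carry more ideals sitting inside $I$ than $A$ does. The plan is to prove that $I$ is nonetheless essential in $B$. Suppose $M$ is a nonzero ideal of $B$ with $I\cap M=0$. Since $I$ and $M$ are both ideals of $B$ we get $IM=MI=0$. Passing to the two-sided ideal $\tilde M:=AMA$ of $A$, which is nonzero precisely because $A$ is unital (so $M\subseteq\tilde M$), associativity gives $I\tilde M=\tilde M I=0$. Essentiality of $I$ in $A$ then forces $I\cap\tilde M\neq 0$; choosing a nonzero $w$ there, the ideal $AwA$ of $A$ is nonzero, lies in $I\cap\tilde M$, and satisfies $(AwA)^2=A(wAw)A\subseteq A(I\tilde M)A=0$. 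Thus $A$ would possess a nonzero nilpotent two-sided ideal, which is impossible because C*-algebras are semiprime (the closure of such an ideal is a self-adjoint closed ideal whose square is zero). This contradiction shows $I\cap M\neq 0$, so $I$ is essential in $B$, and Proposition \ref{eim} finishes the case.

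Assembling the three cases proves the corollary. It is worth recording where the hypotheses enter: the unitality of $A$ is used only through $M\subseteq AMA$ (keeping $\tilde M$ nonzero), and the C*-structure of $A$ only through its semiprimeness; in particular $B$ need not be a C*-subalgebra or even self-adjoint, merely a subring of $A$ containing $I$.
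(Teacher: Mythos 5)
Your proof is correct, and in the essential case it takes a genuinely different (and purely algebraic) route from the paper's. The paper handles that case C*-analytically: it passes to closures, invokes the faithfulness of the left regular representation on $\bar{I}$ as a characterization of essential closed ideals, and then runs the approximate-identity argument of Lemma \ref{l2i} to pull the conclusion back to the non-closed ideals $I$ and $J$ of $B$. You instead prove the general algebraic lemma that an essential ideal of a unital \emph{semiprime} ring remains essential in any intermediate subring containing it: from $I\cap M=0$ you get $IM=MI=0$, blow $M$ up to the nonzero $A$-ideal $AMA$, and use essentiality plus the square-zero ideal $AwA$ to contradict semiprimeness of $A$ (which for C*-algebras follows from the $C^*$-identity as you note). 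This buys a cleaner and more general statement --- unitality of $A$ and semiprimeness are the only inputs, so the argument also covers, e.g., arbitrary semiprime rings, and it sidesteps approximate identities and closures entirely; the paper's route, by contrast, stays within standard C*-technology already set up in Lemma \ref{l2i}. In the relatively irreducible case you are in fact \emph{more} careful than the paper: relative irreducibility in $A$ does not formally transfer to $B$ (which may have more ideals inside $I$), and your use of the intrinsic reformulation $xIy\neq 0$ from Proposition \ref{proC}/Corollary \ref{nck}, together with the observation that $xIy\subseteq J\cap K$ for $B$-ideals $J,K\subseteq I$, is exactly the missing justification for the paper's one-line claim. Your closing remark that $B$ need only be a subring containing $I$ is also correct.
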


\begin{proof}  If $\bar{I}$ is a prime C*-algebra, then $I$ is 
relatively irreducible by the proposition, and hence is a ring $\m$-ideal in $B$.
On the other hand, if $I$ is essential in $A$, then so is $\bar{I}$.
The left regular representation on $\bar{I}$ is faithful on $A$
(this is a well-known characterization of essential ideals in C*-algebras, and is a nice exercise in that subject). Hence, it is also faithful on the closure $\bar{B}$ of $B$.  However, this easily implies that  $\bar{I}$ is essential in $\bar{B}$. 

The argument for Lemma  \ref{l2i} now works to show that for a nonzero ideal $J$ in $B$ we have 
$\overline{I \cap J} = \bar{I} \cap \bar{J} \neq 0$ in $\bar{B}$, and so $I \cap J  \neq 0$.  That is, $I$ is essential in $B$. 
\end{proof}

Of course, the results above apply to all finite-dimensional semisimple algebras by Wedderburn's theorem.

The last result has a variant for von Neumann regular rings.

 \begin{corollary} \label{ncs} If $I$ is an $\m$-ideal in a von Neumann regular  ring $A$ (we assume that $A$ is unital or faithful, that is, ${\rm Ann}(A) = (0)$) and if $I \subseteq B \subseteq A$ with $B$ a subring, then $I$ is an $\m$-ideal in $B$.
\end{corollary}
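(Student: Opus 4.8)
The plan is to apply Theorem~\ref{chmi}, which reduces matters to two cases: $I$ is essential in $A$, or $I$ is relatively irreducible in $A$. In each case I will show that $I$ remains, respectively, essential or relatively irreducible in $B$, and then conclude via Theorem~\ref{chmi} applied to $B$ that $I$ is an $\m$-ideal in $B$. Two elementary facts about a von Neumann regular ring $A$ will be used throughout. First, $A$ is semiprime: if $N$ is an ideal with $N^2=0$ and $a\in N$, choosing $u$ with $a=aua$ gives $au\in N$ and hence $a=(au)a\in N^2=0$. Second, for any $y\in I$ there is an idempotent $h\in I$ with $hy=y$: choosing $v$ with $yvy=y$, the element $h=yv$ lies in $I$ since $I$ is a right ideal, is idempotent, and satisfies $hy=yvy=y$.

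For the essential case I would first record the general principle that an ideal $I$ of a semiprime ring $R$ is essential if and only if $\operatorname{Ann}_R(I)=\{r\in R:rI=Ir=0\}=0$. Indeed, if $I\cap J=0$ then $IJ=JI=0$, so $J\subseteq\operatorname{Ann}_R(I)$, which gives one implication even without semiprimeness; conversely, if $0\neq r\in\operatorname{Ann}_R(I)$ then the two-sided ideal $\langle r\rangle$ it generates satisfies $\langle r\rangle I=I\langle r\rangle=0$, hence $(\langle r\rangle\cap I)^2=0$, so $\langle r\rangle\cap I=0$ by semiprimeness, and since $\langle r\rangle\neq 0$ this shows $I$ is not essential. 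Now, $I$ being essential in the semiprime ring $A$ forces $\operatorname{Ann}_A(I)=0$; since $\operatorname{Ann}_B(I)\subseteq\operatorname{Ann}_A(I)$, we get $\operatorname{Ann}_B(I)=0$, so $I$ is essential in $B$, hence an $\m$-ideal of $B$ by Proposition~\ref{eim}.

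For the relatively irreducible case the crux — and the step I expect to be the main obstacle — is to promote relative irreducibility of $I$ in $A$ to the intrinsic condition that $xIy\neq 0$ for all nonzero $x,y\in I$; this is the ring-theoretic counterpart of the implication (1)$\Rightarrow$(3) in Proposition~\ref{proC}. Suppose to the contrary that $xIy=0$ for some nonzero $x,y\in I$. Pick, using the second fact above, an idempotent $h\in I$ with $hy=y$. Then for every $a\in A$ we have $xay=xa(hy)=x(ah)y\in xIy=0$, since $ah\in I$; hence $xAy=0$. Consequently the nonzero two-sided ideals $\langle x\rangle$ and $\langle y\rangle$ of $A$, both contained in $I$, satisfy $\langle x\rangle\langle y\rangle=0$, because every product that occurs passes through $xAy$; therefore $(\langle x\rangle\cap\langle y\rangle)^2=0$, so $\langle x\rangle\cap\langle y\rangle=0$ by semiprimeness — contradicting relative irreducibility of $I$ in $A$. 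With $xIy\neq 0$ for nonzero $x,y\in I$ established, this is a condition on $I$ alone, so I may read it inside $B$: if $J$ and $K$ are ideals of $B$ with $J,K\subseteq I$ and $J\cap K=0$, and if $0\neq x\in J$ and $0\neq y\in K$, then $xby=(xb)y\in J\cap K=0$ for all $b\in I$, so $xIy=0$ and therefore $x=0$ or $y=0$ — a contradiction. Hence $J=0$ or $K=0$, so $I$ is relatively irreducible in $B$, and thus an $\m$-ideal of $B$ by Theorem~\ref{chmi}.

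Finally, for the faithful (non-unital) hypothesis $\operatorname{Ann}(A)=0$, the same arguments apply essentially verbatim; the only point needing comment is that $\langle x\rangle,\langle y\rangle,\langle r\rangle$ are nonzero, which holds because $x\in xAx\subseteq\langle x\rangle$ by von Neumann regularity, and that for such $x$ one has $\langle x\rangle=AxA$, so the products $\langle x\rangle\langle y\rangle$ still run through $xAy$. In this way the whole argument is a transcription of the proof of the preceding corollary for C*-subalgebras into the purely algebraic setting, with von Neumann regularity entering precisely at the implication ``$xIy=0\Rightarrow xAy=0$'' and semiprimeness playing the role of C*-positivity.
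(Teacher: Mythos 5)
Your proof is correct and follows the same overall strategy as the paper: split via Theorem \ref{chmi} into the essential case and the relatively irreducible case, and show that each property passes from $A$ down to $B$. The differences are in how the two cases are discharged, and they are worth recording. For the relatively irreducible case the paper simply asserts that relative irreducibility of $I$ in $A$ implies relative irreducibility of $I$ in $B$; this is not automatic, since an ideal of $B$ contained in $I$ need not be an ideal of $A$, and your argument supplies exactly the missing justification by first upgrading relative irreducibility in $A$ to the intrinsic condition that $xIy\neq 0$ for all nonzero $x,y\in I$ --- the algebraic analogue of the implication (1)$\Rightarrow$(3) in Proposition \ref{proC} --- using regularity to produce an idempotent $h\in I$ with $hy=y$ (so that $xIy=0$ forces $xAy=0$) and semiprimeness of regular rings to pass from $\langle x\rangle\langle y\rangle=0$ to $\langle x\rangle\cap\langle y\rangle=0$; being intrinsic to $I$, the condition then yields relative irreducibility inside $B$. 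For the essential case the paper shows the left regular representation on $I$ is faithful on $A$ via the identity $AaAI=I\cap (AaA)$ valid in regular rings, and then restricts to $B$; you instead characterize essentiality in a semiprime ring by the vanishing of the two-sided annihilator of $I$ and observe that ${\rm Ann}_B(I)\subseteq{\rm Ann}_A(I)=0$. The two routes are essentially equivalent, both exploiting consequences of regularity (semiprimeness, respectively products of ideals equalling intersections), and your treatment of the relatively irreducible case is the more complete of the two.
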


\begin{proof}   If  $I$ is 
relatively irreducible  in $A$ then  $I$ is 
relatively irreducible  in $B$, and hence is a ring $\m$-ideal in $B$.
On the other hand, if $I$ is essential in $A$, then the left regular representation $L$ on $I$ is faithful on $A$. Indeed 
, if $a I = 0$ then $AaAI = 0$.  However, $AaAI = I \cap (AaA)$ since we are in a regular  ring, and this is nonzero if $a \neq 0$
since $I$ is essential. So $a = 0$. 
Hence, $L$ is also faithful on $B$.  However, this easily implies that  $I$ is essential, hence an $\m$-ideal, in $B$. 
\end{proof} 

We conclude this section with a couple of examples of $\m$-ideals in some famous nonselfadjoint operator algebras and function algebras.

\begin{proposition}
Every ideal in the upper triangular matrices $\cT_n$,
or in in the disk algebra or in $H^\infty(\mathds{D})$, is  an 
$\m$-ideal. \end{proposition}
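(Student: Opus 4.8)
The plan is to treat the three algebras by separate short arguments, each reducing to a result already in hand. For the disk algebra $A(\mathds{D})$ and for $H^\infty(\mathds{D})$ I would simply note that both are integral domains and then invoke Proposition \ref{bpmi}(\ref{aid}). Integrality is immediate from the identity theorem: restriction to the open disc embeds either algebra into the ring of functions holomorphic on the connected set $\mathds{D}$, and there $fg\equiv 0$ with $f\not\equiv 0$ forces $f$ to have only isolated zeros, so $g$ vanishes on a dense open set and hence $g\equiv 0$. Thus every ideal (algebraic, not necessarily closed) of $A(\mathds{D})$ or $H^\infty(\mathds{D})$ is an $\m$-ideal.

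For $\cT_n$, the upper triangular $n\times n$ matrices over a field, with matrix units $E_{ij}$ ($i\le j$), the claim I would establish is that \emph{every} nonzero two-sided ideal is essential; Proposition \ref{eim} then finishes, and the zero ideal is covered by Proposition \ref{bpmi}(1). The crux is that any nonzero two-sided ideal $I$ contains the corner unit $E_{1n}$: picking $0\ne a=\sum_{i\le j}a_{ij}E_{ij}\in I$ with some $a_{pq}\ne 0$, one has $E_{1p}\,a\,E_{qn}=a_{pq}E_{1n}\in I$ via the identity $E_{1p}E_{ij}E_{qn}=\delta_{pi}\delta_{jq}E_{1n}$. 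Hence $\bF E_{1n}\subseteq I$ for every nonzero ideal $I$, so any two nonzero ideals $I,J$ satisfy $I\cap J\supseteq\bF E_{1n}\ne 0$; in other words $\cT_n$ is subdirectly irreducible with heart $\bF E_{1n}$, and every nonzero ideal is essential.

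Since both halves rely only on tools already developed, I do not expect a real obstacle. The only points needing any care are the matrix-unit identity above — together with the observation that it requires no assumption that $I$ be spanned by matrix units, a single nonzero element already forcing $E_{1n}\in I$ — and the routine verification that $\bF E_{1n}$ is a two-sided ideal of $\cT_n$. If one wants $\cT_n$ over a more general coefficient ring, the same computation works after clearing the nonzero scalar $a_{pq}$, or by passing to the two-sided ideal that element generates.
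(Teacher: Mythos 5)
Your proposal is correct and follows essentially the same route as the paper: the function algebras are handled as integral domains via Proposition \ref{bpmi}, and for $\cT_n$ you show every nonzero ideal contains $E_{1n}$ (your computation $E_{1p}\,a\,E_{qn}=a_{pq}E_{1n}$ is exactly the paper's hint about $e_{1i}Je_{jn}$, spelled out), hence is essential. No gaps.
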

	
\begin{proof} 
Every ideal in the upper triangular matrices $\cT_n$ is essential and hence is an 
$\m$-ideal.   This is because every nonzero ideal $J$ in $\cT_n$ contains $e_{1n}$ (this is easily seen by looking at $e_{1i} J e_{jn}$).

Similarly,  every nonzero ideal in the disk algebra or in $H^\infty(\mathds{D})$ is essential and hence is an 
$\m$-ideal.   Indeed, this follows from Proposition \ref{bpmi} since these are integral domains.  If $I$ and $J$ are nonzero ideals here, then their intersection is nonzero.
This follows since for nonzero $f \in I$ and $g \in J$ we must have $fg$ not identically $0$ on the disk. 
\end{proof}

\section{Some Structural Aspects}

We shall now focus on studying the influence of $\m$-ideals on the structure of rings.
We recall that a ring $R$ is {\em subdirectly irreducible} if it is not expressible as a subdirect product of two nonzero rings (see, e.g., \cite{Lam01}). This is equivalent to all the non-zero ideals having a non-zero intersection. It is also equivalent to the existence of an element which is a multiple of all non-zero elements of the ring.

\begin{lemma}
\label{pros}
Suppose that a unital ring $R$ has a nontrivial ideal decomposition $R = I \oplus J$.  Then $I$ is an $\m$-ideal in $R$ if and only if $I$ is relatively irreducible.
\end{lemma}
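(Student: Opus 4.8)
The plan is to read this off immediately from the characterization Theorem \ref{chmi}; the only thing a nontrivial decomposition buys us is that the ``essential'' alternative in that theorem is unavailable.

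First I would record the trivial structural observation: since the decomposition $R = I \oplus J$ is nontrivial, both summands are nonzero, so in particular $J \neq 0$; and since the sum is direct, $I \cap J = 0$. Thus $J$ is a nonzero ideal of $R$ meeting $I$ trivially, which says precisely that $I$ is \emph{not} an essential ideal of $R$. Now I apply Theorem \ref{chmi}: an ideal of $R$ is an $\m$-ideal if and only if it is essential or relatively irreducible. Since the first alternative has just been excluded, $I$ is an $\m$-ideal of $R$ if and only if $I$ is relatively irreducible, which is the assertion.

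If one prefers an argument that does not invoke Theorem \ref{chmi}, the ``if'' direction is still just Theorem \ref{chmi} (relatively irreducible $\Rightarrow$ $\m$-ideal), but the ``only if'' direction can be done by hand. Suppose $I$ is an $\m$-ideal but not relatively irreducible, and choose nonzero ideals $J_1, J_2 \subseteq I$ with $J_1 \cap J_2 = 0$. Put $I_1 = J_1 + J$ and $I_2 = J_2 + J$. Then $I_1 \cap I_2 \supseteq J \neq 0$, and since $J_1 \subseteq I$ the modular law gives $I \cap I_1 = J_1 + (I \cap J) = J_1 \neq 0$, and similarly $I \cap I_2 = J_2 \neq 0$. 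A short computation with $I \cap J = 0$ shows $I \cap (I_1 \cap I_2) = J_1 \cap (J_2 + J) = J_1 \cap J_2 = 0$ (any $x = j_2 + j$ in $J_1$ with $j_2 \in J_2$, $j \in J$ forces $j = x - j_2 \in I \cap J = 0$), contradicting the $\m$-ideal property of $I$. The only obstacle here is this bit of modular-law bookkeeping, and it is entirely routine; there is no substantive difficulty, as all the real content has already been absorbed into Theorem \ref{chmi}.
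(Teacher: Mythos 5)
Your proof is correct and follows the same route as the paper: the nontrivial decomposition forces $I\cap J=0$ with $J\neq 0$, so $I$ is not essential, and Theorem \ref{chmi} then gives the equivalence directly. The optional hand-made argument for the ``only if'' direction is also sound (and is essentially the corresponding step in the paper's proof of Theorem \ref{chmi}), but it is not needed.
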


\begin{proof} Clearly $I$ is not essential, so it is 
an $\m$-ideal in $R$ if and only if it is relatively irreducible.  
\end{proof}

\begin{theorem} \label{fix} Let $n \geq 2$, and $R_1,  \cdots R_n$ be a finite collection of rings, with ideals $I_1, \cdots , I_n$ respectively, with
at least one of these is nonzero.
Let $R = \prod_{i=1}^n \, R_k$, the direct product, and  $I = \prod_{i=1}^n \, I_k$. 
\begin{enumerate}
\item If all the $I_k$ are nonzero then $I$ is  an $\m$-ideal in $R$ if and only if $I_k$ is essential in $R_k$ for all $k$.
\item Suppose that  $I_k = 0$ for some $k$.  Then   $I$ is an $\m$-ideal in $R$ if and only if $I = I_j$ for some $j$
and  $I_j$ is relatively irreducible.
\item Any $\m$-ideal of $R$ is such a direct product of $\m$-ideals, that is it is of the type in {\rm (1)} or {\rm 
 (2)} above. 
\end{enumerate} \end{theorem}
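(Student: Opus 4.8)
The plan is to reduce the statement to Theorem \ref{chmi} together with one structural fact about finite direct products: every ideal of $R = \prod_{k=1}^n R_k$ is a direct product $\prod_k M_k$, where $M_k$ is an ideal of $R_k$. I would set this up via the central idempotents $e_k \in R$ ($1$ in the $k$-th slot, $0$ elsewhere), noting that for any ideal $M$ of $R$ one has $M = \bigoplus_k e_k M$, that $e_k M = M \cap R_k$ is an ideal of $R_k$ (identifying $R_k$ with $e_k R$), that $M \subseteq I$ holds exactly when it holds slotwise, and that intersections of ideals of $R$ are formed slotwise. Granting this, I would record two consequences: \emph{(a)} $I = \prod_k I_k$ is essential in $R$ if and only if each $I_k$ is essential in $R_k$ (both directions are immediate from the slotwise description of ideals and intersections); and \emph{(b)} since $n \geq 2$, if two distinct slots $I_a$ and $I_b$ are both nonzero, then $I$ is \emph{not} relatively irreducible, witnessed by the nonzero ideals $e_a I$ and $e_b I$, which are contained in $I$ and meet in $0$. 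A harmless preliminary reduction discards any factor with $R_k = 0$, so I may assume each $R_k \neq 0$.

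For (1), with all $I_k$ nonzero: by \emph{(b)} and $n \geq 2$, $I$ is never relatively irreducible, so by Theorem \ref{chmi} it is an $\m$-ideal if and only if it is essential, which by \emph{(a)} holds if and only if each $I_k$ is essential in $R_k$.

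For (2), with some $I_k = 0$: then $I$ is not essential, witnessed by the nonzero ideal $e_k R$ in the $k$-th slot, so by Theorem \ref{chmi}, $I$ is an $\m$-ideal if and only if it is relatively irreducible. By \emph{(b)}, relative irreducibility forces at most one slot to be nonzero, and since $I \neq 0$ exactly one slot $I_j$ is nonzero; then $I = e_j I$ is a direct summand, and the natural correspondence between ideals of $R$ contained in $I$ and ideals of $R_j$ contained in $I_j$ (which respects intersections) shows that $I$ is relatively irreducible in $R$ if and only if $I_j$ is relatively irreducible in $R_j$ — alternatively one may invoke Lemma \ref{pros} at this point. This yields precisely the asserted criterion.

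Part (3) is then immediate: an arbitrary $\m$-ideal $I$ of $R$ equals $\prod_k (I \cap R_k)$ by the structural fact, and according to whether all these slots are nonzero or not, part (1) or part (2) applies and already exhibits $I$ as a product of ideals which are themselves $\m$-ideals of the respective factors — essential ones in case (1) by Proposition \ref{eim}, and the single relatively irreducible slot together with zero slots in case (2), using Theorem \ref{chmi} and Proposition \ref{bpmi}(1). I do not anticipate a genuine obstacle: the real content is just that the hypothesis $n \geq 2$ makes the ``split across two slots'' argument available, which is what forces essentiality in (1) and forces all-but-one slot to vanish in (2); the only care needed is for the degenerate cases $I = 0$ or a zero direct factor, which the one-line reduction handles.
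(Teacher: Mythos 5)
Your proof is correct and follows essentially the same route as the paper: both rest on the fact that ideals of a finite direct product are exactly the slotwise products, combined with the essential versus relatively-irreducible dichotomy. The only difference is cosmetic — you route all three parts uniformly through Theorem \ref{chmi}, whereas the paper re-derives the relevant special cases of that dichotomy by hand in parts (1) and (3) and invokes Lemma \ref{pros} (itself a consequence of Theorem \ref{chmi}) for part (2).
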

	
\begin{proof}  This relies on the well-known fact that the ideals of a direct product of a finite number of rings are exactly the direct products of
ideals.   In places below we will also identify $I = \oplus_k \, I_k$, and think of this as a sum of ideals of $R$.  
 
If $I$ is an $\m$-ideal and at least two of the $I_k$ are nonzero then $I$ is essential (from which it follows that all of the $I_k$ are nonzero and essential in $R_k$). 
For otherwise, if $J \cap I= 0$ for  ideal $J \neq 0$,
then $J + I_1$ and $J + I_2$ intersect each other nontrivially, and each intersects $I$ if $I_1$ and $I_2$ are
nonzero, but
their intersection does not  intersect $I$ nontrivially.  
 
(1)\ Suppose that $I_k$ is essential in $R_k$ for all $k$ and that
$J_k$ and $K_k$ are ideals  in $R_k$ for all $k$, with $J_i \cap K_i \neq 0$ say.  Then
$I_i \cap (J_i \cap K_i) \neq 0$.  So $I \cap (\prod_{i=1}^n \, J_i) \cap (\prod_{i=1}^n \, K_i) \neq 0$. Hence
$I$ is an $\m$-ideal.
 
Conversely, if one of the $I_k$ is not essential in $R_k$
then $I$ is not an $\m$-ideal.  For example, suppose that $I_1$ is not essential
with  $J \cap I_1 = 0$ for an ideal $J$ in $R_1$. Then $J \cap I = J \cap I_1 = 0$,
which contradicts the fact in the second paragraph of the proof if $I$ is an $\m$-ideal.

(2)\ If at least two of the $I_k$ are nonzero  then  they must be all nonzero to get an  $\m$-ideal
by the second paragraph.   If all the $I_k$ are zero except one, then 
the proof of  Lemma \ref{pros} gives (2). 

(3)\ Suppose that $I$  is an $\m$-ideal of $\prod_{i=1}^n \, R_k$.  Then by the fact in the first paragraph of the proof,
$I = \prod_{i=1}^n \, I_k$.
Suppose that $J_1$ and $K_1$ are ideals in $R_1$ with $J_1 \cap K_1, J_1 \cap I_1, I_1 \cap K_1$ nonzero.  Then the intersection of any two of
\[J_1 \times \left(\prod_{i=2}^n \, I_k\right),\; \; \;
K_1 \times \left(\prod_{i=2}^n \, I_k\right), \; \; \; I = I_1 \times \left(\prod_{i=2}^n \, I_k\right)\]
is nonzero.
Thus, the intersection of these three is nonzero.  Thus $I_1 \cap J_1 \cap K_1 \neq 0$.
Hence $I_1$ is an $\m$-ideal, and similarly so is $I_2, \cdots, I_n$. Thus $I$ is a direct product of $\m$-ideals. 
\end{proof}

\begin{remark}  \label{sdp} Following on from the last result, one may ask if an $\m$-ideal of a subdirect product $R$ of a finite number of rings $R_i$ is a subdirect product of $\m$-ideals of $R_i$?   However, this is false.  If $n = p_1 \cdots p_m$ is a product of primes, then $\mathds{Z}_n$ is a subdirect product of 
the simple rings $\mathds{Z}_{p_i}$.  However, it is easy to see that the $\m$-ideal
$(2)$ of $\mathds{Z}_{6}$ is not a subdirect product of  $\mathds{Z}_{2}$ and $\mathds{Z}_{3}$. 

In addition, it is also false that in the setting of the last paragraph of a subdirect product $R$ of rings $R_i$,   $\m$-ideals $I_k$ of $R_k$ correspond to an $\m$-ideal of $R$, in analogy to the theorem. To see this consider $\mathds{Z}_{90}$ as the subdirect product of $\mathds{Z}_{2}$, $\mathds{Z}_{3}$ and $\mathds{Z}_{5}$,  as in the last paragraph.  Then $(2)$ is a subdirect product of the $\m$-ideals $(2), (0), (0)$ of the latter three rings.  However $(2)$ is not an $\m$-ideal of $\mathds{Z}_{90}$, as may be seen for example from Theorem \ref{charzn}. 
\end{remark}

Our next theorem characterizes direct sum simple-representation of rings in terms of the absence of $\m$-ideals, and it extends Theorem 3 from \cite{OJ81}. To prove the result, we first need a lemma.

\begin{lemma}
\label{fl1}
If \( I \) is a nonzero proper ideal of a ring \( R \), then either \( I \) is a direct summand of \( R \) (so is unital and has a complementary ideal in $R$), or there exists a proper \( \m \)-ideal of \( R \) containing \( I \).
\end{lemma}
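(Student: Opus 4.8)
The plan is to attach to $I$ a maximal ``complement'' ideal and let the dichotomy fall out of it. First I would apply Zorn's lemma to the family $\mathcal{F}$ of all ideals $K$ of $R$ with $I \cap K = 0$, partially ordered by inclusion. This family is nonempty (it contains $0$) and is closed under unions of chains: if $x$ belongs to $I$ and to the union of a chain from $\mathcal{F}$, then $x$ belongs to $I \cap K = 0$ for some $K$ in the chain. Let $L$ be a maximal element of $\mathcal{F}$. (One has $L = 0$ exactly when $I$ is essential, but there is no need to separate that case.)

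The key step is to verify that $I \oplus L$ (the sum is direct because $I \cap L = 0$) is an essential ideal of $R$; this is the classical ``a maximal complement sits essentially alongside the original ideal'' argument. Given a nonzero ideal $D$ of $R$: if $D \cap L \neq 0$ then $D$ already meets $I \oplus L$; otherwise $D \cap L = 0$, so $L + D$ strictly contains $L$ (because $D \neq 0$ and $D \cap L = 0$), and maximality of $L$ yields a nonzero element $b \in I \cap (L + D)$. Writing $b = c + d$ with $c \in L$ and $d \in D$, the element $d = b - c$ is nonzero (else $b = c \in I \cap L = 0$) and lies in $(I \oplus L) \cap D$. Hence $I \oplus L$ meets every nonzero ideal of $R$, so it is essential, and therefore an $\m$-ideal by Proposition \ref{eim}; and it plainly contains $I$.

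Finally I would split on whether $I \oplus L = R$. If $I \oplus L \neq R$, then $I \oplus L$ is a proper $\m$-ideal of $R$ containing $I$ and we are done. If $I \oplus L = R$, then, since $R$ is unital, write $1 = e + f$ with $e \in I$ and $f \in L$; from $IL \subseteq I \cap L = 0$ and $LI \subseteq I \cap L = 0$ one gets $ae = ea = a$ for all $a \in I$ (so $e$ is an identity for $I$) and $bf = fb = b$ for all $b \in L$, whence $e$ is a central idempotent, $I = eR$, and $L = (1-e)R$ is a complementary ideal; thus $I$ is a unital direct summand of $R$, as required. The only points needing care are the closure of $\mathcal{F}$ under chain-unions (so that Zorn applies) and, in the branch $I \oplus L = R$, the verification that $e$ really is a central identity for $I$ — both immediate from $I \cap L = 0$ together with $IL = LI = 0$; I expect the essentiality of $I \oplus L$ to be the one substantive ingredient.
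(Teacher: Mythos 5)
Your proof is correct, and it takes a genuinely different route from the paper's. The paper applies Zorn's lemma to the family $\mathcal{C}$ of proper ideals of $R$ that contain $I$ and are not direct summands, and shows directly that a maximal element $K$ of $\mathcal{C}$ is an $\m$-ideal: if the $\m$-condition failed for $K$ via some intersection $I_1\cap I_2$, then $K+(I_1\cap I_2)$ would be a strictly larger member of $\mathcal{C}$, contradicting maximality. You instead run the classical essential-closure argument: choose $L$ maximal with $I\cap L=0$, prove that $I\oplus L$ is essential (your verification --- pushing a nonzero element of $I\cap(L+D)$ down to a nonzero element of $(I\oplus L)\cap D$ --- is the standard one and is carried out correctly), conclude via Proposition \ref{eim} that $I\oplus L$ is an $\m$-ideal, and then split on whether $I\oplus L$ is proper or equals $R$; in the latter case your check that the $I$-component $e$ of $1$ is a central idempotent with $I=eR$ (using $IL=LI\subseteq I\cap L=0$) is complete. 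Your route buys a bit more: the $\m$-ideal you produce above $I$ is actually essential, and the argument is essentially the $Q=0$ instance of the $\m$-complement construction the paper only develops later (Theorem \ref{mct} and Corollary \ref{lco}), so it unifies the lemma with that machinery. The paper's route, by contrast, is tailored to produce an ideal that is maximal among proper non-summands containing $I$, which is the form of the statement invoked in the introduction (every ideal not a direct summand lies in a maximal $\m$-ideal); your construction does not directly give that maximality, but the lemma as stated does not require it.
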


\begin{proof}
Suppose that \( I \) is 
an ideal of \( R \) which is not a direct summand.
We claim that there exists an \( \m \)-ideal \( K \) of \( R \) such that \( I \subseteq K \).
Let \( \mathcal{C} \) be the collection of all ideals \( K \) of \( R \), which are not a direct summand of $R$, such that \( I \subseteq K \) and \( K \neq R \).  Since $I\in \mathcal{C}$, the set $\mathcal{C}$ is nonempty.
By Zorn's Lemma, the set \( \mathcal{C} \) is partially ordered by inclusion, and every chain in \( \mathcal{C} \) has an upper bound.
Note that this upper bound, the union of the chain, cannot be a direct summand of $R$.  For if it were, then it and hence one of the ideals $J_i$ in the chain, is unital. Hence, $J_i$ is a direct summand, which is impossible.  Therefore, \( \mathcal{C} \) has a maximal element \( K \).

To prove that \( K \) is an \( \m \)-ideal, let \( I_1\) and \(I_2\) be nonzero ideals of \( R \) such that \(  I_1\cap I_2 \neq 0 \) and \( K \cap I_k \neq 0 \) for each \( k \).
If \( K \cap \big(\bigcap_{k=1}^2 I_k\big) = 0 \), then the ideal \( K + \big(\bigcap_{k=1}^2 I_k\big) \) is strictly larger than \( K \) containing \( I \), and \[ K + \big(\bigcap_{k=1}^2 I_k\big) \neq R,\] since $K$ is not a direct summand of $R$. A similar argument shows that \( K + \big(\bigcap_{k=1}^2 I_k\big) \) is not a direct summand.  This contradicts the maximality of \( K \) in \( \mathcal{C} \).
Hence, \( K \cap \big(\bigcap_{k=1}^2 I_k\big) \neq 0 \), proving that \( K \) is an \( \m \)-ideal.
\end{proof}

For example, suppose that 
not every ideal in $I$ is essential. So  $I$ has two ideals $J_1$, $J_2$ with intersection $0$. Then $I$ is not an $\m$-ideal of $R$ by the last result. 

\begin{theorem}\label{dss}
For any ring \( R \), the following are equivalent:
\begin{enumerate}
\item Every proper \( \m \)-ideal of \( R \) is a direct summand of \( R \).

\item Every proper \( \m \)-ideal of \( R \) is a simple ideal of \( R \) which is also a direct summand. 

\item Every proper ideal of \( R \) is a direct summand of \( R \).

\item \( R \) is a direct sum of simple rings.
\end{enumerate}
\end{theorem}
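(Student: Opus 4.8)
The plan is to prove the cycle of implications $(1)\Rightarrow(3)\Rightarrow(4)\Rightarrow(2)\Rightarrow(1)$, leaning on Lemma \ref{fl1} for the crucial step and on the well-known structure theory of semisimple rings for the rest. The implications $(2)\Rightarrow(1)$ and $(3)\Rightarrow(1)$ are essentially immediate, since a simple ideal which is a direct summand is in particular a proper $\m$-ideal that is a direct summand, and every proper $\m$-ideal is in particular a proper ideal; so the real content is $(1)\Rightarrow(3)$, $(3)\Rightarrow(4)$, and $(4)\Rightarrow(2)$.

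First I would prove $(1)\Rightarrow(3)$. Let $I$ be any nonzero proper ideal of $R$ (the zero ideal is trivially a direct summand via $R = 0 \oplus R$). By Lemma \ref{fl1}, either $I$ is a direct summand — and we are done — or $I$ is contained in some proper $\m$-ideal $K$ of $R$. In the latter case, hypothesis (1) forces $K$ to be a direct summand, say $R = K \oplus L$ with $L \neq 0$. Now I want to push this down to $I$. Since $K$ is unital (being a direct summand), $I$ is an ideal of the unital ring $K$, and $I$ is proper in $K$ unless $I = K$, in which case $I$ is already a direct summand. If $I$ is a proper ideal of $K$, I would like to recurse: but to do this cleanly I should instead observe that the argument must be made to terminate. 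The cleanest route is to argue that $(1)$ actually holds for $K$ as well — every proper $\m$-ideal of $K$ is a direct summand of $K$ — because an $\m$-ideal $M$ of $K$ gives, together with $L$, a decomposition-compatible ideal $M \oplus L$ of $R$, and one checks $M$ is an $\m$-ideal of $R$ precisely when ... here I would need to be careful, so the alternative and safer approach is a Zorn/minimality argument directly: among all nonzero proper ideals of $R$ that are NOT direct summands, if the collection is nonempty pick a minimal one $I$ (or use Lemma \ref{fl1} to land in a proper $\m$-ideal $K$, which by (1) is a direct summand hence unital, and then note $I$ sits inside the smaller unital ring $K$, repeating — using that $R$ has finite length is NOT given, so minimality is the issue). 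The honest main obstacle is exactly this: without a chain condition, iterating ``$I \subseteq K$, $K$ a summand'' need not terminate, so I expect the real proof descends via Lemma \ref{fl1} combined with the observation that a direct summand $K$ inherits property (1), and then invokes that every element generates an ideal, reducing to showing each principal ideal is a summand, which by a further application of Lemma \ref{fl1} inside $K$ and an ascending-chain-free but descending argument on summand-complements closes up.

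Next, $(3)\Rightarrow(4)$: this is a classical characterization of semisimple rings. If every proper ideal of $R$ is a direct summand, then in particular $R$ is a semisimple ring in the sense that $R$, as a left module over itself, is a sum of simple submodules — more precisely, the hypothesis that every ideal is a direct summand makes the lattice of two-sided ideals complemented, and combined with the unital hypothesis one shows $R$ has no nonzero nilpotent ideals and satisfies the descending chain condition on the ideals that actually occur, so by the Wedderburn–Artin theorem (or directly: a ring in which every two-sided ideal is a direct summand and which is unital is a finite direct product of simple rings) we get $R = \bigoplus_{i=1}^m S_i$ with each $S_i$ simple. I would cite \cite{Lam01} or \cite{Row88} for the precise statement that a (two-sided) ideal-semisimple unital ring is a finite direct sum of simple rings. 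Then $(4)\Rightarrow(2)$: if $R = \bigoplus_{i=1}^m S_i$ with $S_i$ simple, then by Theorem \ref{fix}(3) every $\m$-ideal of $R$ is a direct product of $\m$-ideals of the $S_i$; but the only ideals of a simple ring $S_i$ are $0$ and $S_i$, and by Theorem \ref{fix}(2) a proper $\m$-ideal (one with some coordinate zero) must be of the form $I = I_j$ for a single $j$ with $I_j$ relatively irreducible — here $I_j = S_j$, which is simple hence relatively irreducible. So a proper $\m$-ideal of $R$ is exactly some $S_j$, which is a simple ideal and visibly a direct summand. This gives (2), and with the trivial implication $(2)\Rightarrow(1)$ noted above the cycle is complete; the genuinely delicate point throughout is the termination/chain issue in $(1)\Rightarrow(3)$, which I would resolve by exploiting that direct summands are unital and inherit hypothesis (1), so the descent through a strictly decreasing chain of summand-complements $L \subsetneq L' \subsetneq \cdots$ must stabilize.
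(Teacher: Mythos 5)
Your cycle $(3)\Rightarrow(4)\Rightarrow(2)\Rightarrow(1)$ is fine: $(3)\Leftrightarrow(4)$ is exactly Armendariz's theorem \cite{Arm68}, which is what the paper cites, and your derivation of $(4)\Rightarrow(2)$ from Theorem \ref{fix}(2)--(3) is a legitimate (arguably cleaner) alternative to the paper's hands-on construction of two ideals $J_0+J_i$ and $J_0+J_j$ whose triple intersection with $J$ vanishes. The genuine gap is where you yourself flag it: $(1)\Rightarrow(3)$. Iterating Lemma \ref{fl1} (``$I$ sits inside a proper $\m$-ideal $K$, which is a summand, now recurse inside $K$'') produces a chain with no reason to terminate, and your proposed rescue --- that ``the descent through a strictly decreasing chain of summand-complements must stabilize'' --- is precisely a chain condition that is not among the hypotheses; nothing in (1) supplies it. As written, this implication is not proved.

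The way to close the gap is to avoid Lemma \ref{fl1} here entirely and use essentiality, which is what the paper does. First observe that under (1) the ring $R$ has no proper essential ideals: a proper essential ideal $J$ is an $\m$-ideal by Proposition \ref{eim}, hence by (1) a direct summand, say $R=J\oplus L$ with $L\neq 0$; but then $J\cap L=0$ contradicts essentiality. Now invoke the standard fact (Lemma 1 of \cite{OJ81}, which the paper cites; it is the ideal-theoretic analogue of the existence of essential closures) that every ideal $I$ admits a complement $N$ with $I\cap N=0$ and $I+N$ essential. Since $R$ has no proper essential ideals, $I+N=R$, so $I$ is a direct summand. This is a one-step argument with no induction or termination issue. (Alternatively, the paper's own Corollary \ref{lco} gives each $I$ an $\m$-complement $N$ with $I+N$ a --- possibly proper --- $\m$-ideal; applying (1) to $I+N$ and splitting off its complement again exhibits $I$ as a summand. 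Either route works; yours does not.)
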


\begin{proof}
If every \( \m \)-ideal of \( R \) is a direct summand of \( R \), then $R$ has no essential ideals.  By Proposition \ref{eim} and Lemma 1 from \cite{OJ81}, every ideal is a direct
summand of $R$.  Hence, (1) implies (3).  Clearly (2) implies (1). For a proof of (3)$\Leftrightarrow$(4), we refer to \cite{Arm68}. 

If (4) holds and $J$ is a proper ideal in $R$, then it is easy to see that $J$ is the direct sum $\oplus_{i \in I} J_i$ of certain of the simple subrings.   Let $J_0$ be one of the simple subrings different from the $J_i$. Suppose that $I$ is not singleton, with $i,j \in I$, $i \neq j$.   Setting $I_1 = J_0 \cup J_i, I_2 = J_0 \cup J_j$, we have $\bigcap_{k=1}^n I_k\neq 0$, and   \(J \cap I_k \neq 0\) for each \(k\), but 
\[	J \cap \left( \bigcap_{k=1}^n I_k \right) =0. \]
So, $J$ is not an $\m$-ideal.   On the other hand,
we know simple ideals are $\m$-ideals.  This proves (2).
\end{proof}

The conditions above are not equivalent to: every proper \( \m \)-ideal of \( R \) is a simple ideal of \( R \).   For example, let $R$ be the unitization of $M_n$.
This has the latter property (it has one proper $\m$-ideal, but there are no proper ideals that have a complemented ideal).    

\begin{corollary}\label{nmi} A ring with no proper $\m$-ideals is simple. 
\end{corollary}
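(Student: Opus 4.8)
The plan is to obtain Corollary \ref{nmi} as a short consequence of Lemma \ref{fl1}, Theorem \ref{dss}, and Proposition \ref{bpmi}(\ref{minm}). Here ``proper $\m$-ideal'' should be read as an $\m$-ideal distinct from both $(0)$ and $R$: recall that $(0)$ is always an $\m$-ideal by Proposition \ref{bpmi}(1), so the hypothesis is precisely that $(0)$ and $R$ are the only $\m$-ideals of $R$. I will tacitly assume $R \neq 0$ and aim to show that $R$ is simple.

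The first step is to check that, under this hypothesis, every nonzero proper ideal $I$ of $R$ is a direct summand of $R$. This is immediate from Lemma \ref{fl1}: that result says $I$ is a direct summand of $R$ or else there is a proper $\m$-ideal $K$ of $R$ with $I \subseteq K$; but then $K$ contains the nonzero ideal $I$, so $K \neq (0)$, while $K \neq R$, making $K$ a nontrivial $\m$-ideal and contradicting the hypothesis. Since $(0)$ is trivially a direct summand, it follows that every proper ideal of $R$ is a direct summand, that is, condition (3) of Theorem \ref{dss} holds. By the implication (3)$\Rightarrow$(4) of that theorem, $R$ is a direct sum of simple rings; since $R$ is unital this sum is finite, say $R = R_1 \oplus \cdots \oplus R_m$ with each $R_i$ simple.

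The final step rules out $m \geq 2$. Suppose $m \geq 2$ and fix the summand $R_1$. Since $R = R_1 \oplus (R_2 \oplus \cdots \oplus R_m)$, every ideal of $R$ contained in $R_1$ is an ideal of the ring $R_1$, hence is $(0)$ or $R_1$ by simplicity; thus $R_1$ is a minimal ideal of $R$, and so an $\m$-ideal by Proposition \ref{bpmi}(\ref{minm}). But $R_1$ is nonzero and, because $R_2 \oplus \cdots \oplus R_m \neq 0$, properly contained in $R$, so it is a nontrivial $\m$-ideal, contradicting the hypothesis. Hence $m = 1$ and $R = R_1$ is simple. I do not expect a genuine obstacle in this argument; the only points needing a little care are the reading of ``proper'' (this is why I route through condition (3) rather than (1) or (2) of Theorem \ref{dss}, condition (3) being insensitive to whether $(0)$ counts as ``proper'') and the observation that the $\m$-ideal produced by Lemma \ref{fl1} is automatically nonzero. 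One could instead attempt a direct descent, taking a nonzero proper ideal, using Lemma \ref{fl1} to make it a direct summand, then Lemma \ref{pros} to peel off a proper piece, and repeating; but this threatens a non-terminating chain of direct summands, so channelling the argument through the decomposition theorem bundled into Theorem \ref{dss} is the safer route.
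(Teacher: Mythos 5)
Your proof is correct and follows essentially the same route as the paper's: Lemma \ref{fl1} to show every proper ideal is a direct summand, Theorem \ref{dss} to decompose $R$ as a finite direct sum of simple rings, and then the fact that minimal (simple) ideals are $\m$-ideals to force a single summand. Your version merely spells out the details (the reading of ``proper,'' the finiteness of the sum, and why each summand is a minimal ideal of $R$) that the paper leaves implicit.
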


\begin{proof}  By the lemma, every proper ideal is a direct summand.
So by the theorem, $R$ is a direct sum of simple rings. Since simple ideals are $\m$-ideals, $R$ must in fact be simple. 
\end{proof}

\begin{theorem}\label{morita}
Morita equivalence of unital algebras preserves essential ideals and  $\m$-ideals.
\end{theorem}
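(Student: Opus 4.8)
The plan is to reduce the statement to the classical fact from Morita theory that a Morita equivalence between unital rings induces a lattice isomorphism between their lattices of two-sided ideals, and then observe that both ``essential'' and ``$\m$-ideal'' are properties of an ideal formulated entirely in terms of the ideal lattice and its least element $0$. Concretely, if $R$ and $S$ are Morita equivalent, fix a Morita context $(R,S,P,Q)$ with $P$ an $R$-$S$-bimodule, $Q$ an $S$-$R$-bimodule, and the pairings $P\otimes_S Q\to R$, $Q\otimes_R P\to S$ surjective, hence (by the standard lemma) isomorphisms. Then $\Phi\colon I\mapsto QIP$ carries two-sided ideals of $R$ to two-sided ideals of $S$, and $\Psi\colon L\mapsto PLQ$ is its inverse; both are inclusion-preserving, so $\Phi$ is an order isomorphism of the ideal lattice $\mathrm{Id}(R)$ onto $\mathrm{Id}(S)$. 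In particular $\Phi(0)=0$, $\Phi(R)=S$, and, since an order isomorphism of lattices preserves meets, $\Phi\big(\bigcap_k I_k\big)=\bigcap_k\Phi(I_k)$ for any finite family of ideals. I would record these facts first, with a pointer to a standard source (e.g.\ \cite{Lam01}).

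Granting this, the conclusion is formal. For essentiality: if $J$ is essential in $R$ and $M$ is any nonzero ideal of $S$, write $M=\Phi(I)$ with $I$ a nonzero ideal of $R$ (possible since $\Phi$ is bijective and $\Phi(0)=0$); then $\Phi(J)\cap M=\Phi(J)\cap\Phi(I)=\Phi(J\cap I)\neq 0$ because $J\cap I\neq 0$ and $\Phi$ is injective with $\Phi(0)=0$. Hence $\Phi(J)$ is essential in $S$. For $\m$-ideals: let $J$ be an $\m$-ideal of $R$, and let $M_1,\dots,M_n$ be nonzero ideals of $S$ with $\bigcap_{k}M_k\neq 0$ and $\Phi(J)\cap M_k\neq 0$ for each $k$. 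Writing $M_k=\Phi(I_k)$ and using that $\Phi$ preserves finite intersections and sends $0$ to $0$, we get that $I_1,\dots,I_n$ are nonzero, $\bigcap_k I_k\neq 0$, and $J\cap I_k\neq 0$ for each $k$; since $J$ is an $\m$-ideal, $J\cap\bigcap_k I_k\neq 0$, and therefore $\Phi(J)\cap\bigcap_k M_k=\Phi\big(J\cap\bigcap_k I_k\big)\neq 0$, so $\Phi(J)$ is an $\m$-ideal of $S$. Applying the same argument with $\Psi$ in place of $\Phi$ gives the reverse implications, so the ideal correspondence preserves each of the two classes in both directions.

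I do not anticipate a serious obstacle; the one point that deserves a sentence of justification is that the Morita ideal correspondence respects finite intersections. This follows from the soft observation that a bijection between lattices which preserves inclusion in both directions is automatically a lattice isomorphism, and hence preserves meets. Alternatively one can check $\Phi(I\cap I')=\Phi(I)\cap\Phi(I')$ by hand from $\Phi(I)=QIP$: the inclusion $\Phi(I\cap I')\subseteq\Phi(I)\cap\Phi(I')$ is immediate, and applying the order-preserving inverse $\Psi$ to $\Phi(I)\cap\Phi(I')$ shows this ideal lies in $I\cap I'$, so applying $\Phi$ back yields the reverse inclusion. Either way the argument is short, and unitality enters exactly where it is needed, namely to guarantee that the Morita pairings are isomorphisms so that $\Phi$ and $\Psi$ are mutually inverse bijections of the full ideal lattices.
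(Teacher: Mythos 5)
Your proposal is correct and follows essentially the same route as the paper: both arguments invoke the standard fact that a Morita equivalence induces a lattice isomorphism of the two-sided ideal lattices (the paper writes the correspondence explicitly via the Morita context, much as you do with $I\mapsto QIP$), and then observe that essentiality and the $\m$-ideal condition are purely lattice-theoretic properties preserved by any such isomorphism. No meaningful differences to report.
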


\begin{proof}  Suppose that $(R,S,X,Y)$ is a Morita context with $X \otimes_R Y \cong S$ and $Y \otimes_S X \cong R$. 
In particular, we may write $1_R = \sum_{k=1}^n (y_k,x_k)$ and $1_S = \sum_{k=1}^m [w_k,z_k]$, with, $x_k, w_k \in X,$ and $y_k, z_k \in Y$.  The map from ideals $J$ in $S$ to ideals in $R$, for example,
takes $\{ a \in J \}$ to \[T(J) = \left\{ \sum_{k=1}^N (r_k,a s_k) \mid a  \in J , N \in \bN, r_k \in Y, s_k \in X \right\}.\] 
It is well known that $T(J \cap K) = T(J) \cap T(K)$; indeed, Morita equivalence induces a lattice isomorphism of the ideal lattice of $S$ onto the ideal lattice of $R$.  
Moreover,  
$J \cap K = (0)$ if and only if $T(J) \cap T(K) = (0)$. 
Thus, $J$ is essential if and only if $T(J)$ is essential.   Similarly, $J$ is an $\m$-ideal if and only if $T(J)$ is an $\m$-ideal.
\end{proof}

The following result is well-known (see, e.g., Proposition 5.6 in \cite{GW04}) to hold for essential submodules. 

\begin{proposition}\label{krs}
Suppose that $R$ is a ring (not necessarily unital).
\begin{enumerate}
\item If $N$ and $N'$ are ideals of $R$ such that  $N$ is an $\m$-ideal of $N'$ and $N'$ is an $\m$-ideal of $R$, then $N$ is an $\m$-ideal of $R$.

\item Suppose that $R$ is a regular ring. Let \( A_1, A_2, B_1, B_2 \) be ideals of  \( R \). If \( A_1\) is an $\m$-ideal of \(B_1 \) and \( A_2\) is an $\m$-ideal of \(B_2 \), then \(A_1 \cap A_2\) is an $\m$-ideal of \(B_1 \cap B_2\).
\end{enumerate}
\end{proposition}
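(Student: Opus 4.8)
The plan is to handle the two parts separately, reducing each to material already in hand: a transitivity-style argument straight from Definition~\ref{mid} for part (1), and a combination of a ``restriction'' step with the earlier proposition that a finite intersection of $\m$-ideals is an $\m$-ideal for part (2). For part (1), let $I_1, I_2$ be nonzero ideals of $R$ with $I_1 \cap I_2 \neq 0$ and $N \cap I_k \neq 0$ for $k=1,2$. Since $N \subseteq N'$, each $N' \cap I_k$ is a nonzero ideal of $R$, so the hypothesis that $N'$ is an $\m$-ideal of $R$ gives $N' \cap I_1 \cap I_2 \neq 0$. Now $N' \cap I_1$ and $N' \cap I_2$ are ideals of $N'$ (being ideals of $R$ contained in $N'$), their intersection $N' \cap I_1 \cap I_2$ is nonzero, and $N \cap (N' \cap I_k) = N \cap I_k \neq 0$ because $N \subseteq N'$. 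Applying the hypothesis that $N$ is an $\m$-ideal of $N'$ yields $N \cap (N' \cap I_1) \cap (N' \cap I_2) \neq 0$, and this set equals $N \cap I_1 \cap I_2$. Hence $N$ is an $\m$-ideal of $R$; no unitality is used.

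For part (2) write $C = B_1 \cap B_2$ and note $A_1 \cap A_2 \subseteq C$. The structural input supplied by regularity is the lemma that, in a regular ring, an ideal of an ideal is an ideal of the whole ring. I would prove this by first noting that an ideal $B$ of a regular ring $R$ is itself regular, so each two-sided ideal $K$ of $B$ is idempotent and in fact $KBK = K$ (from $K = K^2 = K^3 \subseteq KBK \subseteq K$); then for $r \in R$ and $z = \sum_j u_j b_j v_j \in KBK$ one has $r u_j \in RB \subseteq B$, hence $r u_j b_j \in B$ and $(r u_j b_j) v_j \in BK \subseteq K$, so $RK \subseteq K$, and symmetrically $KR \subseteq K$. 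Granting this, every ideal of $C$ is an ideal of $R$, hence an ideal of both $B_1$ and $B_2$. The restriction step then follows: if $A_1$ is an $\m$-ideal of $B_1$, then $A_1 \cap C$ is an $\m$-ideal of $C$, since given nonzero ideals $I_1, I_2$ of $C$ with $I_1 \cap I_2 \neq 0$ and $(A_1 \cap C) \cap I_k = A_1 \cap I_k \neq 0$, the $I_k$ are, by the lemma, nonzero ideals of $B_1$, so the $\m$-ideal property of $A_1$ in $B_1$ gives $A_1 \cap I_1 \cap I_2 \neq 0$, which equals $(A_1 \cap C) \cap I_1 \cap I_2$ because $I_1 \cap I_2 \subseteq C$. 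Symmetrically $A_2 \cap C$ is an $\m$-ideal of $C$. Applying the earlier proposition that a finite intersection of $\m$-ideals is an $\m$-ideal, now inside the ring $C$, we conclude that $(A_1 \cap C) \cap (A_2 \cap C) = A_1 \cap A_2$ is an $\m$-ideal of $C = B_1 \cap B_2$.

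The main obstacle is the regularity lemma: without it an ideal of $C = B_1 \cap B_2$ need not be an ideal of $B_1$, so the $\m$-ideal hypothesis on $A_1$ in $B_1$ cannot be invoked and the restriction step collapses; this is exactly why the statement assumes $R$ regular. The degenerate cases, such as $A_1 \cap C = 0$, cause no difficulty since the zero ideal is an $\m$-ideal by Proposition~\ref{bpmi}(1), and likewise $A_1 \cap A_2$ may be zero. A minor routine verification, which I would not belabor, is that $A_1 \cap C$ is indeed an ideal of $C$ and that the various set-theoretic identities among the intersections hold.
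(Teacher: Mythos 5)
Your proof of part (1) is correct and is essentially the paper's own argument: apply the $\m$-property of $N'$ in $R$ to get $N'\cap I_1\cap I_2\neq 0$, then feed the ideals $N'\cap I_k$ of the ring $N'$ into the $\m$-property of $N$ in $N'$. For part (2) your argument is also correct but is organized differently from the paper's. The paper works directly: given nonzero ideals $N_1,N_2$ of $B_1\cap B_2$ meeting $A_1\cap A_2$, it (implicitly) first applies the $\m$-property of $A_2$ in $B_2$ to get $A_2\cap N_1\cap N_2\neq 0$, and then the $\m$-property of $A_1$ in $B_1$ to the ideals $A_2\cap N_1$ and $N_2$, yielding $A_1\cap(A_2\cap N_1)\cap N_2\neq 0$. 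You instead factor the argument through two reusable pieces: a restriction lemma ($A_i$ an $\m$-ideal of $B_i$ implies $A_i\cap C$ an $\m$-ideal of $C=B_1\cap B_2$) followed by the earlier proposition that finite intersections of $\m$-ideals are $\m$-ideals, applied inside the ring $C$. Both routes hinge on exactly the same structural input, namely that in a regular ring an ideal of an ideal is again an ideal of the whole ring; the paper only mentions this in the remark following the proposition, whereas you supply a correct proof of it (via $K=KBK$), which is a welcome addition. Your modular route is slightly longer but makes the role of regularity and of the intersection proposition more transparent, and the restriction lemma is of independent interest (compare Corollary \ref{ncs}); the paper's route is shorter but leaves the intermediate step $A_2\cap N_1\cap N_2\neq 0$ unstated. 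Your handling of the degenerate cases ($A_1\cap A_2=0$, etc.) matches the paper's ``we may assume $A_1\cap A_2\neq 0$''.
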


\begin{proof}
(1) 
Suppose that $ N_1$ and $ N_2$ are nonzero ideals of \( R\) such that 
$
N_1\cap N_2 \neq 0.
$
We assume that \( N \cap N_k \neq 0 \) for each \( k  \). Then $N'\cap N_k\neq 0$ for each $k$.
Since \( N'\) is an $\m$-ideal of $R$, we have \(N'\cap \left(  N_1\cap N_2\right)  \neq 0\). This implies that $0\neq  N'\cap N_k\subseteq N'$ for each $k$ with $\bigcap_{k=1}^2(N'\cap N_k)\neq 0.$ Since \( N\) is an $\m$-ideal of $N'$, moreover, we have
\[
0\neq N \cap \left(\bigcap_{k=1}^2 (N'\cap N_k)\right) =N \cap \left( N_1\cap N_2\right),
\]
which proves the claim.

(2) 
Suppose that \( A_1\) is an $\m$-ideal of $B_1$ and \( A_2 \) is an $\m$-ideal of $B_2$.  We may asume that $A_1 \cap A_2 \neq 0$. Let \(N_1\) and \(N_2\) be nonzero ideals of \( B_1 \cap B_2 \) such that \(N_1\cap N_2 \neq 0\). Suppose that \( (A_1 \cap A_2) \cap N_k \neq 0 \) for each \(k\).   
Our hypothesis implies that $N_k$ is an ideal in $R$ and in $B_1$.  
Also, $$A_1 \cap A_2 \cap (N_1\cap N_2) =  A_1 \cap (A_2 \cap N_1) \cap N_2 \neq 0,$$ since $A_1$ is an $\m$-ideal in $B_1$.
\end{proof}

\begin{remark}
We remark that the hypothesis that $R$ is regular was used to obtain that an ideal of an ideal of $R$ is an ideal of $R$.  Thus, it is unnecessary in rings that have this property.  One may also replace this hypothesis with a `relative $\m$-ideal' condition, in which 
the ideals $N_k$ are assumed to be ideals in the larger ring.  We will not take the time to develop this. 
\end{remark}

Motivated by essential complements of submodules, we shall now introduce a similar notion for $\m$-ideals and study some of their properties.

\begin{definition}
An \emph{$\m$-complement} of an ideal $N$ of a ring $R$ is an ideal $N'$ of $R$ such that $N\cap N'=0$ and $N+N'$ is an $\m$-ideal of $R$.    
\end{definition}

\begin{remark}
Note that $\m$-complements are not unique. Indeed, in $\mathds{Z}_{12}$, the ideals  $(3)$ and $(4)$ are complements of each other, as $(3)\cap (4) =(0)$ and $(3)+ (4)=\mathds{Z}_{12}$. Consider $(2)$,  $(3)\cap (2)=(6)$, and $(4)\cap (2)=(4)$. It is easily seen that $(2)$ is an $\m$-ideal. Moreover, $(3)\cap (4) = (0)$ and $(6)+(4)=(2) \mathds{Z}_{12}$. Therefore, both $(3)$ and $(6)$ are $\m$-complements of $(4)$.    
\end{remark}

\begin{proposition}
If $A$ and $N$ are ideals of a regular arithmetic ring\footnote{Recall that a ring is said to be \emph{arithmetic} if its lattice of all ideals is distributive.} $R$ such that $A\subseteq N$, and $B$ is an $\m$-complement of $A$ in $R$, then $B \cap N$ is an $\m$-complement of $A$ in $N$.
\end{proposition}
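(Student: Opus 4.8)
The plan is to reduce everything to Proposition~\ref{krs}(2). Unwinding the hypothesis, ``$B$ is an $\m$-complement of $A$ in $R$'' means $A\cap B=0$ and $A+B$ is an $\m$-ideal of $R$, and the goal ``$B\cap N$ is an $\m$-complement of $A$ in $N$'' means exactly that $A\cap(B\cap N)=0$ and $A+(B\cap N)$ is an $\m$-ideal of $N$. The first of these is immediate: $A\cap(B\cap N)=(A\cap B)\cap N=0$.

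For the second, the key algebraic identity is $A+(B\cap N)=(A+B)\cap N$. Since $A\subseteq N$, this is precisely the modular law for the lattice of ideals (and it also follows directly from distributivity in the arithmetic ring $R$, as $(A+B)\cap(A+N)=A+(B\cap N)$ and $A+N=N$). So it suffices to prove that $(A+B)\cap N$ is an $\m$-ideal of $N$. I would obtain this by invoking Proposition~\ref{krs}(2) for the regular ring $R$, with the choices $A_1=A+B$, $B_1=R$, $A_2=N$, $B_2=N$: by hypothesis $A+B$ is an $\m$-ideal of $R$, and $N$ is trivially an $\m$-ideal of $N$ (for any collection of nonzero ideals $I_1,\dots,I_n$ of $N$ with $\bigcap_k I_k\neq 0$ we have $N\cap I_k=I_k\neq 0$ and $N\cap\bigcap_k I_k=\bigcap_k I_k\neq 0$, so Definition~\ref{mid} holds). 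Proposition~\ref{krs}(2) then yields that $A_1\cap A_2=(A+B)\cap N$ is an $\m$-ideal of $B_1\cap B_2=R\cap N=N$. Combining this with the identity $A+(B\cap N)=(A+B)\cap N$ gives that $A+(B\cap N)$ is an $\m$-ideal of $N$, and together with $A\cap(B\cap N)=0$ this says exactly that $B\cap N$ is an $\m$-complement of $A$ in $N$.

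The ``hard part'' here is really just bookkeeping: one must check that Proposition~\ref{krs}(2) applies verbatim, i.e.\ that $R$ itself and $N$ itself are admissible as the ambient ideals $B_1,B_2$ and that $N$ is an $\m$-ideal of $N$, and one must be sure the modular-law identity $(A+B)\cap N=A+(B\cap N)$ is legitimate for $A\subseteq N$. Note that the regularity hypothesis on $R$ enters the argument only through Proposition~\ref{krs}(2) (there it guarantees that an ideal of an ideal is again an ideal), while the arithmetic hypothesis is used only to justify the lattice identity above, and even that is already covered by modularity.
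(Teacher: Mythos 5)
Your proof is correct, but it takes a genuinely different route from the paper's. The paper argues directly: it takes nonzero ideals $K_1,K_2$ of $N$ meeting $A+(B\cap N)$, uses distributivity twice to rewrite $(A+(B\cap N))\cap K_i=(A+B)\cap K_i$ (and likewise for $K_1\cap K_2$), and then invokes the hypothesis that $A+B$ is an $\m$-ideal of $R$, with regularity guaranteeing that the $K_i$ are ideals of $R$. You instead establish the single identity $A+(B\cap N)=(A+B)\cap N$ via Dedekind's modular law (valid since $A\subseteq N$ and the ideal lattice of any ring is modular), and then outsource the rest to Proposition \ref{krs}(2) with $A_1=A+B$, $B_1=R$, $A_2=B_2=N$; your check that $N$ is trivially an $\m$-ideal of itself is fine, and $B\cap N$ and $A$ are indeed ideals of the ring $N$ as required by the definition of $\m$-complement. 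Two remarks on what each approach buys. First, your observation that the arithmetic (distributivity) hypothesis is redundant is correct and is a genuine improvement: the paper's own two-fold distributivity computation is subsumed by the modular identity, since for $K\subseteq N$ one gets $(A+(B\cap N))\cap K=((A+B)\cap N)\cap K=(A+B)\cap K$ from modularity alone. Second, your proof is only as strong as Proposition \ref{krs}(2), whose proof in the paper is rather terse (it silently uses the $\m$-property of $A_2$ in $B_2$ before applying that of $A_1$ in $B_1$); the paper's direct argument for the present proposition avoids that dependency and uses only the hypothesis that $A+B$ is an $\m$-ideal of $R$. Both proofs use regularity for the same purpose, namely to promote ideals of $N$ to ideals of $R$.
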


\begin{proof}
Since $A \cap B = 0$ and $B \cap N \subseteq B$, it follows that
\(
A \cap (B \cap N) \subseteq A \cap B = 0.
\) To show  $A + (B \cap N)$ is an $\m$-ideal of $N$,
let $K_1$ and $K_2$  be  nonzero ideals of $N$ such that
\(
K_1\cap K_2 \neq 0\) and \((A + (B \cap N)) \cap K_i \neq 0 \), for each $i$.
Since $A + (B \cap N) \subseteq A + B$ and $K_i \subseteq N \subseteq R$, by applying distributivity twice, we have
\[
0\neq (A + (B \cap N)) \cap K_i = (A + B) \cap K_i,\] for each $i$.
Thus, we can rewrite
\[
(A + (B \cap N)) \cap \left( K_1\cap K_2\right) = (A + B) \cap \left( K_1\cap K_2 \right).
\]
Since    
\(
K_1\cap K_2 \neq 0\) and \((A + B) \cap K_i \neq 0
\) for each $i$, and $A + B$ is an $\m$-ideal, we obtain
\(   
(A + B) \cap \left( K_1\cap K_2 \right) \neq 0.
\)    
Therefore, 
\[
(A + (B \cap N)) \cap \left( K_1\cap K_2 \right) \neq 0.
\qedhere	\]
\end{proof}

\begin{theorem}\label{mct}
If \(N\) and \(Q\) are ideals of a ring \(R\) with \(N \cap Q = 0\), then \(N\) has an \( \m \)-complement containing \(Q\).
\end{theorem}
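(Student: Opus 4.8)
The plan is to construct the $\m$-complement by a Zorn's Lemma maximality argument, directly transcribing the classical construction of essential complements of submodules. Let $\mathcal{C}$ be the collection of all ideals $N'$ of $R$ satisfying $Q \subseteq N'$ and $N \cap N' = 0$. Since $N \cap Q = 0$ by hypothesis, $Q \in \mathcal{C}$, so $\mathcal{C} \neq \emptyset$; and if $\{N'_\alpha\}$ is a chain in $\mathcal{C}$, then $\bigcup_\alpha N'_\alpha$ is an ideal containing $Q$ with $N \cap \bigcup_\alpha N'_\alpha = \bigcup_\alpha (N \cap N'_\alpha) = 0$, so the chain has an upper bound in $\mathcal{C}$. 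Hence $\mathcal{C}$ has a maximal element $N'$. The claim to be proved is that $N + N'$ is an essential ideal of $R$; granting this, $N + N'$ is an $\m$-ideal by Proposition \ref{eim}, and since $N \cap N' = 0$ and $Q \subseteq N'$, this $N'$ is an $\m$-complement of $N$ containing $Q$, as desired.

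To prove $N + N'$ is essential, I would argue by contradiction: suppose $I$ is a nonzero ideal of $R$ with $(N + N') \cap I = 0$. Then $I \not\subseteq N'$, since $I \subseteq N'$ would force $I = I \cap N' \subseteq I \cap (N + N') = 0$; hence $N' \subsetneq N' + I$. Next, $N' + I \in \mathcal{C}$: clearly $Q \subseteq N' \subseteq N' + I$, and if $x \in N \cap (N' + I)$, write $x = n' + i$ with $n' \in N'$ and $i \in I$; then $i = x - n' \in N + N'$, so $i \in (N + N') \cap I = 0$, giving $x = n' \in N \cap N' = 0$. Thus $N' + I$ is an element of $\mathcal{C}$ strictly larger than $N'$, contradicting maximality. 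Therefore $(N + N') \cap I \neq 0$ for every nonzero ideal $I$ of $R$; that is, $N + N'$ is essential.

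I do not expect a genuine obstacle here: the argument is a routine maximality/complement argument, and the only step requiring a little care is checking that enlarging $N'$ by $I$ keeps the intersection with $N$ trivial, which is the short direct computation above (notably, no modular law is needed). As sanity checks, when $N = 0$ the maximal element is $R$ itself and $N + N' = R$ is essential, so the statement holds trivially; and when $Q = 0$ the construction recovers the usual essential complement of $N$.
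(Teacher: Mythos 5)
Your proof is correct and follows essentially the same route as the paper: the same Zorn's Lemma poset of ideals $N'\supseteq Q$ with $N\cap N'=0$, and the same mechanism for the contradiction (a nonzero ideal meeting $N+N'$ trivially would let you enlarge the maximal $N'$). The only difference is that you explicitly prove the stronger statement that $N+N'$ is \emph{essential} (then invoke Proposition \ref{eim}), carrying out by hand the modular-law computation that the paper delegates to Lemma 6.3 of \cite{Cal00} when it verifies the $\m$-ideal condition directly; this is a mild and welcome sharpening rather than a different argument.
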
 

\begin{proof}
Consider the set \[\mathcal{S}:= \left\{ N'\mid N'\;\text{is an ideal of}\; R,\, N \cap N' = 0,\;\text{and}\; Q \subseteq N'\right\}.\]
Clearly \(Q \in  \mathcal{S}\), so \(\mathcal{S}\) is nonempty. By Zorn's Lemma, \(\mathcal{S}\) has a maximal element, say \(N'\).
We claim that \(N + N'\) is an $\m$-ideal of $R$. If not, then there exist nonzero ideals \(N_1\) and \(N_2\) of \(R\) such that \(N_1\cap N_2\neq 0\) and \((N + N') \cap N_k \neq 0\) for each \(k\), but \[(N + N') \cap \left(N_1\cap N_2 \right) = 0.\] Then by Lemma 6.3 from 
\cite{Cal00}, we have $N\cap \left(N'+(N_1\cap N_2) \right) = 0,$ contradicted the maximality of \(N'\). 
\end{proof}

The following corollary is worth comparing with Theorem 2.4.8 in \cite{Row88}.

\begin{corollary}\label{lco}
In a ring $R$, 
every ideal
$N$ of  $R$ has an 
\( \m \)-complement. 
\end{corollary}

\begin{proof} 
Take $Q=0$ in Theorem \ref{mct}. 
\end{proof}

We can also see from Theorem \ref{mct} that 
if a ring $R$ does not have any proper $\m$-ideal, then $R$ is complemented. (Indeed: if $R$ does not have any proper $\m$-ideals, then by Corollary \ref{lco}, for every ideal $N$ of $R$, there exists an ideal $N'$ of $R$ such that $R=N\oplus N'$, and hence, $R$ is complemented.)
However, this follows from the stronger result,
Corollary \ref{nmi}.     

\begin{cremark} The theory of $M$-ideals, as developed here, constitutes a generalization of essential ideals and provides a new perspective on structural properties of rings.
Given the important connection between `essential' and `injective' and `injective hull' in both ring theory and functional analysis (see e.g.\ \cite[Chapter 4]{BLM}), and in particular for Banach and operator space $M$-ideals (see e.g.\ Section 4.8 of \cite{BLM} and \cite{HWW93}), it is important to explore this connection both for rings and modules. It may be also fruitful to define invariants associated to $\m$-ideals, such as an ``$\m$-dimension'' of a ring, defined by the maximal length of a strictly ascending chain of $\m$-ideals, or the ``$\m$-length'' of a module, measuring the number of nontrivial $\m$-submodules in a composition series. These may provide new quantitative tools for measuring how far a ring is from being simple. 
There is an effort made in connecting essential ideals with radical theory (see \cite{OJ81, JR82, J85}). It would be interesting to see how far these ideas can be extended in the setting of  $\m$-ideals.

Characterizations of essential ideals for rings of measurable functions have been done in \cite{Mom10}, and similar results have been obtained for quotient Pr\"{u}fer domains and structural matrix rings in \cite{GW89}. It may be worth seeing how the characterizations of $\m$-ideals in these rings differ from essential ideals.

Given the topological motivations behind $\m$-ideals in Banach spaces, it is reasonable to investigate their analogues in topological or linearly topological rings. In particular, one may study the behavior of $\m$-ideals under completions, localizations, or limits. It would be worthwhile to determine whether $\m$-ideals are preserved under flat or faithfully flat base change and how they interact with completions in the topological sense, especially in rings of continuous functions or formal power series.
\end{cremark}

\subsection*{Acknowledgements}D.P. Blecher acknowledges support from NSF Grant DMS-2154903, and also from the School of Mathematical and Statistical Sciences at NWU Potchefstroom, which helped make an international visit possible.  We also thank Francois Schulz for introducing us to each other.

\end{document}